\newtheorem{theorem}    {Theorem}[section]
\newtheorem{lemma}      [theorem]{Lemma}
\newtheorem{proposition}        [theorem]{Proposition}
\newtheorem{example}    	[theorem]{Example}
\newcommand{\calP}[1][\null]{
	\if #1\null%
		\mathcal{P}%
	\else{ \mathcal{P}_{#1} }
	\fi
}
\newcommand{\fish}{g_F}
\newcommand{\connm}{\nabla^{(m)}}
\newcommand{\conne}{\nabla^{(e)}}
\newcommand{\KL}[2]{KL\left[ #1 \| #2 \right]}
\newcommand{\onevec}[1]{1_{#1}}
\newcommand{\lam}{\lambda}
\newcommand{\etij}{\eta_{ij}}
\newcommand{\etim}{\eta_{im}}
\newcommand{\etin}{\eta_{in}}
\newcommand{\etnj}{\eta_{nj}}
\newcommand{\thij}{\theta^{ij}}
\newcommand{\thim}{\theta^{im}}
\newcommand{\thin}{\theta^{in}}
\newcommand{\thnj}{\theta^{nj}}
\newcommand{\Etij}[1][k]{H^{#1}_{ij}}
\newcommand{\Etin}[1][k]{H^{#1}_{in}}
\newcommand{\Etnj}[1][k]{H^{#1}_{nj}}
\newcommand{\Thij}[1][k]{\Theta_{#1}^{ij}}
\newcommand{\Thin}[1][k]{\Theta_{#1}^{in}}
\newcommand{\Thnj}[1][k]{\Theta_{#1}^{nj}}
\newcommand{\Frechet}{Fr\'{e}chet}
\newcommand{\tA}{\tilde{A}}
\newcommand{\tH}{\tilde{\mathcal{H}}}
\newcommand{\tT}{\tilde{\mathcal{T}}}
\newcommand{\tPhi}{\tilde{\Phi}}
\newcommand{\tq}{\tilde{q}}
\newcommand{\hPhi}{\hat{\Phi}}
\newcommand{\cPhi}{\check{\Phi}}
\newcommand{\cg}{\check{g}}
\newcommand{\cdel}{\check{\nabla}}
\newcommand{\del}{\partial}
\newcommand{\p}{\partial}
\newcommand{\hs}{\hspace{10mm}}
\newcommand{\etal}[1][~]{\textit{et al.}#1}
\newcommand{\Rnm}{\R_{++}^{n\times m}}
\newcommand{\Pnm}{\calP_{nm-1}}
\newcommand{\Pnn}{\calP_{n^2-1}}
\newcommand{\Pt}{P^{(t)}}
\newcommand{\Ptt}{P^{(t+1)}}
\newcommand{\Qt}{Q^{(t)}}
\newcommand{\ut}{u^{(t)}}
\newcommand{\utt}{u^{(t+1)}}
\newcommand{\vt}{v^{(t)}}
\newcommand{\vtt}{v^{(t+1)}}
\newcommand{\at}{\alpha^{(t)}}
\newcommand{\att}{\alpha^{(t+1)}}
\newcommand{\bt}{\beta^{(t)}}
\newcommand{\btt}{\beta^{(t+1)}}
\newcommand{\PtN}[1]{P^{(#1;t)}}
\newcommand{\PttN}[1]{P^{(#1;t+1)}}
\newcommand{\QtN}[1]{Q^{(#1;t)}}
\newcommand{\atN}[1]{\alpha^{(#1;t)}}
\newcommand{\attN}[1]{\alpha^{(#1;t+1)}}
\newcommand{\btN}[1]{\beta^{(#1;t)}}
\newcommand{\bttN}[1]{\beta^{(#1;t+1)}}
\newcommand{\dM}{\mathbb{M}}
\newcommand{\ep}{\varepsilon}
\newcommand{\vphi}{\varphi}
\newcommand{\N}{\mathbb{N}}
\newcommand{\R}{\mathbb{R}}  
\newcommand{\calH}{\mathcal{H}}
\newcommand{\calT}{\mathcal{T}}
\newcommand{\inc}{\subset}
\newcommand{\any}{\null^{\forall}}
\newcommand{\exs}{\null^{\exists}}
\newcommand{\st}{\text{ s.t. }}
\newcommand{\ow}{\text{otherwise}}
\newcommand{\1}[1]{\frac{1}{#1}}
\newcommand{\pd}[2]{\frac{\partial #1}{\partial #2}}
\newcommand{\sumn}[2][n]{\sum_{#2 = 1}^{#1}}
\newcommand{\ran}[3][\null]{1\leq #1#2 \leq #3}
\renewcommand{\(}{\left(}
\renewcommand{\)}{\right)}
\newcommand{\<}{\left\langle}	
\renewcommand{\>}{\right\rangle}
\begin{document}

\title{
	Optimal transport problems regularized by generic convex functions: 
	A geometric and algorithmic approach
}
\author{
	Daiji Tsutsui \\[2mm]
	\normalsize Department of Mathematics, Osaka University, \\
	\normalsize Toyonaka, Osaka 560-0043, Japan \\
	\normalsize d-tsutsui@cr.math.sci.osaka-u.ac.jp 
}
\date{}
\maketitle

\begin{abstract}
	In order to circumvent the difficulties in solving numerically the discrete optimal transport problem, 
	in which one minimizes the linear target function $P\mapsto\<C,P\>:=\sum_{i,j}C_{ij}P_{ij}$, 
	Cuturi introduced a variant of the problem in which 
	the target function is altered by a convex one $\Phi(P)=\<C,P\>-\lam\calH(P)$, 
	where $\calH$ is the Shannon entropy and $\lam$ is a positive constant. 
	We herein generalize their formulation to a target function
	of the form $\Phi(P)=\<C,P\>+\lam f(P)$, 
	where $f$ is a generic strictly convex smooth function. 
	We also propose an iterative method for finding a numerical solution,  
	and clarify that the proposed method is particularly efficient when $f(P)=\1{2}\|P\|^2$. 
\end{abstract}


	\section{Introduction}
	
The optimal transportation theory \cite{Villani03,Santambrogio15} was established to solve 
a following type of a minimization problem. 
Let us consider a discrete setting, that is, the sets of sources and targets are finite. 
Let $\calP_{n-1}$ be the set of nonsingular probability distributions on $\{1,2,\dots,n\}$, 
which is given by
\begin{align*}
	\calP_{n-1} := \Set{p\in\R^n | \sumn[n]{i} p_i = 1, ~p_i > 0, 1\leq i\leq n},  
\end{align*}
where the subscript $n-1$ describes the dimension as a manifold.
We regard each element of $\calP_{n-1}$ as a distribution of resources positioned
on the set of $n$ sources. 
We also assign $\calP_{m-1}$ to the set of distributions of resources required in $m$ targets. 
In order to represent transportation,
we introduce a set $\Pnm$ defined by
\begin{align*}
	\Pnm := \Set{P=(P_{ij})\in\R^{n\times m} | \sum_{i,j} P_{ij} = 1, ~P_{ij} > 0, 1\leq i\leq n, 1\leq j\leq m}.     
\end{align*}
Each entry $P_{ij}$ means a quantity transported from the source $i$ to the target $j$, 
and hence, each $P\in\Pnm$ is called a \textit{transport plan}. 
Given $p\in\calP_{n-1},q\in\calP_{m-1}$, each element of the subset 
\begin{align*}
	\Pi(p,q) := \Set{P\in\Pnm | \sumn{j}P_{ij} = p_i, ~ \sumn{i}P_{ij} = q_j}
\end{align*}
is called a transport plan from $p$ to $q$ or a \textit{coupling} of $p$ and $q$. 
Let $C=(C^{ij})$ be a given matrix
whose entry $C^{ij}$ means a transport cost from $i$ to $j$.  
Then, the \textit{optimal transport problem} is presented as the minimization problem: 
\begin{align}	\label{eq:Was-prob}
	W(p,q) := \inf_{P\in\Pi(p,q)} \<P,C\> = \inf_{P\in\Pi(p,q)} \sum_{i,j}P_{ij}C^{ij},  
\end{align}
where the target function $\<P,C\>$ is the total cost of the transport plan $P$. 
When $m=n$ and $C$ is a distance matrix, 
the minimum $W(p,q)$ defines a distance on $\calP_{n-1}$ 
that is often referred to as the \textit{Wasserstein distance}. 

The optimal transport problem \eqref{eq:Was-prob} can be solved by the linear programming; 
however, it costs $O(\max\{n,m\}^3)$ time. 
Cuturi proposed an alternative minimization problem
by changing the target function to a convex one, 
which is usually referred to as the \textit{entropic regularization} \cite{Cuturi13}. 
He introduced the problem: 
\begin{align}	\label{eq:Cut-prob}
	W_\lambda(p,q) &:= \inf_{P\in\Pi(p,q)} \<C,P\> - \lambda \calH(P), \quad \lambda>0,   \\
	\calH(P) &:= -\sum_{ij} P_{ij} \log P_{ij}.  \notag
\end{align}
Here, the function $\calH$ is known as the Shannon entropy
and is smooth and concave with respect to the ordinary affine structure of $\Pnm$. 
While the quantity $W_\lam(p,q)$, 
in contrast with $W(p,q)$, 
does not define a distance on $\calP_{n-1}$, 
$W_\lam(p,q)$ gives an approximation of the Wasserstein distance  
in the sense that $W_\lam(p,q)$ converges to $W(p,q)$ as $\lam$ tends to $0$ \cite{CuturiPeyre16}. 
Cuturi showed that $W_\lam(p,q)$ is obtained by the \textit{Sinkhorn algorithm}, 
which costs $O(\max\{n,m\}^2)$ time \cite{Cuturi13}. 
When $m$ or $n$ is large, one can thus compute $W_\lam(p,q)$ much faster than $W(p,q)$.  

Cuturi's entropic regularization has a wide range of applications. 
Frogner \etal \cite{FrognerZMAP} used $W_\lam$ as a loss function for a supervised learning. 
Courty \etal \cite{CourtyFTR} applied it to the unsupervised domain adaptation, 
which is a type of classification. 
In the field of image processing, as another application, 
a method for interpolating several patterns in the metric space $(\calP_{n-1}, W)$ has been devised.  
It is mathematically given as a solution of the minimization problem: 
\begin{align*}
	\textbf{Minimize} ~\sumn[N]{k}r_k W(p^k,q) ~\mathrm{under} ~q\in\calP_{n-1},  
\end{align*}
where $p^1,\dots, p^N\in\calP$, $r_1,\dots,r_N>0$ with $\sum_{k=1}^N r_k = 1$, 
and is called the Wasserstein barycenter \cite{AguehCarlier}. 
Also in this problem, 
by replacing $W$ to $W_\lam$, an approximation of the barycenter can be computed by
a high-speed algorithm \cite{CuturiDoucet,BenamouCCNP}. 

Amari \etal \cite{AmariKO} pointed out that the set $\calP^{opt}$ of all optimal plans 
in Cuturi's problem \eqref{eq:Cut-prob} is an exponential family, 
and clarified that the entropic regularization is described in terms of the information geometry \cite{AmariNagaoka}. 
Their work illustrated that the Sinkhorn algorithm searches for a solution
by successive applications of the $e$-projection to a pair $m$-autoparallel submanifolds. 
Due to the projection theorem in information geometry, 
the Kullback-Leibler divergence measured from the solution decreases monotonically. 
A similar geometric interpretation for the Wasserstein barycenter was given by Benamou \etal \cite{BenamouCCNP}.

Muzellec \etal \cite{MuzellecNPN} extended Amari-Cuturi's framework
to the optimal transport problem regularized by the Tsallis entropy. 
Essid and Solomon \cite{EssidSolomon} studied the transport problem with quadratic regularization on a graph, 
and established a Newton-type algorithm to solve that.  


In this paper, we generalize Amari-Cuturi's framework to the problem
\begin{align}	\label{eq:gen-conv-prob}
	\vphi(p,q) &:= \inf_{P\in\Pi(p,q)} \<P,C\> + \lam f(P) , 
\end{align}
where $f:\Pnm\to\R$ is a smooth convex function and $\lambda$ is a positive constant. 
Denoting by $\Phi(P)$, the target function $P\mapsto\<P,C\> + \lam f(P)$, 
this problem is rewritten as
\begin{align}	\label{eq:gen-prob}
	\vphi(p,q) &= \inf_{P\in\Pi(p,q)} \Phi(P),   
\end{align}
for a smooth convex function $\Phi$ on $\Pnm$. 
We further devise a procedure to find a solution of \eqref{eq:gen-prob}
and the relaxed barycenter problem: 
\begin{align}	\label{eq:gen-bary}
	\textbf{Minimize} ~\sumn[N]{k}r_k \, \vphi(p^k,q) ~\mathrm{under} ~q\in\calP_{n-1}.      
\end{align}
When the domain of a dual problem has no restriction,
it is proved that our procedure monotonically decreases 
the Bregman divergence \cite{Bregman} as measured from the optimal solution.  
We also address when the dual domain is unbounded from above, 
and we establish generalizations of the algorithms
based on the non-smooth convex analysis. 
In numerical simulations, we show that the generalized algorithms works particularly well
in the quadratic regularized optimal transport problem, 
which is the problem \eqref{eq:gen-conv-prob} with $f(P)=\1{2}\|P\|^2$. 

This paper is organized as follows. 
Section 2 gives a brief review on Amari-Cuturi's framework for the entropic regularization. 
In Section 3, we investigate the problems \eqref{eq:gen-prob} and \eqref{eq:gen-bary} 
from an information geometric point of view
under the assumption that the dual domain has no restriction.  
In Section 4, relaxing the assumption, 
we study those problems when the dual domain is unbounded from the above. 
We also confirm that our algorithm works efficiently when $f(P)=\1{2}\|P\|^2$ in numerical simulations.  
Section 5 offers concluding remarks.


	\section{Entropic regularization of optimal transport problem}
	

\subsection{Cuturi's entropic regularization} 
We herein outline the entropic regularization of the optimal transport problem by Cuturi. 
In his article \cite{Cuturi13}, 
instead of the optimization problem \eqref{eq:Was-prob},  
Cuturi considered the alternative problem \eqref{eq:Cut-prob}. 
Under the natural affine structure of $\Pnm$, 
the target function 
\begin{equation}	\label{eq:phi-lambda}
	\Phi_\lam(P) := \<C,P\> - \lambda \calH(P)
\end{equation}
in Cuturi's problem \eqref{eq:Cut-prob} is strictly convex 
in contrast with that of the original problem \eqref{eq:Was-prob} because of the Shannon entropy $\calH$. 
Since the function $\Phi_\lam(P)$ is strictly convex on a convex affine subspace $\Pi(p,q)$, 
the problem \eqref{eq:Cut-prob} has a unique optimum $P^*(p,q)$, 
which is called the \textit{optimal transport plan} from $p$ to $q$,  
at least on the closure $\overline{\Pi(p,q)}\inc\R^{n\times m}$. 

Cuturi showed that there exists a solution $(\alpha, \beta)\in\R^{n+m}$ of a dual problem \cite{Cuturi13}, 
which gives the optimum as 
\begin{equation}	\label{eq:optimal-trans}
	P^*(p,q)_{ij} = \exp\( \1{\lam}(\alpha_i + \beta_j - C^{ij}) \). 
\end{equation}
Due to the form \eqref{eq:optimal-trans} of the optimal plan, 
Cuturi proposed that one can compute $P^*(p,q)$ at high speed 
by the Sinkhorn algorithm presented in Algorithm \ref{alg1}. 
Putting $K_{ij}:=\exp(-C^{ij}/\lam), u_i=\exp(\alpha_i/\lam), v_j=\exp(\beta_j/\lam)$, 
it is also denoted as 
\begin{equation}	\label{eq:optimal-trans2}
	P^*(p,q)_{ij} = u_i K_{ij} v_j. 
\end{equation}
By the Sinkhorn algorithm, we obtain the sequence $\Pt\in\Pnm$
defined by
\[
	\Pt_{ij} := \ut_i K_{ij} \vt_j,  
\]
where $\ut, \vt$ are vectors obtained in Algorithm \ref{alg1}. 
It is known that $\{\Pt\}$ converges to $P^*(p,q)$ as $t$ tends to infinity \cite[Section 4]{CuturiPeyre19}. 
This algorithm requires $O(\max\{n,m\}^2)$ time 
for each iteration of the \textbf{while} loop, 
while the linear programming costs $O(\max\{n,m\}^3)$ time. 

\begin{algorithm}[htbp]
	\caption{Sinkhorn algorithm}		\label{alg1}                          
	\begin{algorithmic}
		\STATE $p\in\calP[n-1], q\in\calP[m-1], K=(K_{ij})\in\R^{n\times m}, \lambda>0$: given
		\STATE $u^{(0)}\in\R^n, v^{(0)}\in\R^m$: initial values
		\WHILE{until converge}
			\FOR{$i=1$ to $n$}
				\STATE $\utt_i \Leftarrow p_i \big/ \( \sumn[m]{j} K_{ij} \vt_j \)$
			\ENDFOR
			\FOR{$j=1$ to $m$}
				\STATE $\vtt_j \Leftarrow q_j \big/ \( \sumn[n]{i} \utt_i K_{ij} \)$
			\ENDFOR
			\STATE $t \Leftarrow t+1$
		\ENDWHILE
	\end{algorithmic}
\end{algorithm}

\subsection{Information geometric perspective of entropic regularization} 
Amari \etal \cite{AmariKO} focused on the set of optimal transport plans
\[
	\calP^{opt} := \Set{P^*(p,q)\in\Pnm | p\in\calP[n-1], q\in\calP[m-1] }, 
\]
which is an exponential family with canonical parameters $(\alpha,\beta)$ 
as seen in Eq. \eqref{eq:optimal-trans}. 
In terms of the information geometry \cite{AmariNagaoka}, 
on the probability simplex $\Pnm$ with the dually flat structure $(\fish,\connm,\conne)$, 
$\calP^{opt}$ is a $\conne$-autoparallel submanifold, 
where $\fish$ is the Fisher metric, and $\connm, \conne$ are $m$-, $e$- connections, respectively.  

On the other hand, for each $p\in\calP[n-1], q\in\calP[m-1]$, 
submanifolds $M_{p,\cdot}, M_{\cdot,q}$ defined by
\begin{align*}
	M_{p,\cdot} := \Set{ P\in\Pnm | \sumn[m]{j}P_{ij} = p_i }, \quad
	M_{\cdot,q} := \Set{ P\in\Pnm | \sumn[n]{i}P_{ij} = q_j }
\end{align*}
are $\connm$-autoparallel. 
Moreover, they are orthogonal to $\calP^{opt}$ with respect to $\fish$. 
This fact allows us interpret the Sinkhorn algorithm as an iterative geometric operations. 
More precisely, it is stated as the following. 

\begin{proposition}[Amari \etal\cite{AmariKO}]
	Let $\{\ut\}_t, \{\vt\}_t$ be a sequence given by Algorithm \ref{alg1}, 
	and $\Pt, \Qt\in\Pnm$ defined as
	\[
		\Pt_{ij} := \ut_i K_{ij} \vt_j, \qquad
		\Qt_{ij} := \utt_i K_{ij} \vt_j, 
	\]
	for each $t\in\N$.
	Then, for each instant $t$, 
	$\Qt$ attains the $e$-projection of $\Pt$ onto $M_{p,\cdot}$, 
	and $\Ptt$ attains that of $\Qt$ onto $M_{\cdot,q}$. 
\end{proposition}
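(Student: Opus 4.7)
The plan is to invoke the projection theorem of information geometry: on the dually flat manifold $(\Pnm, \fish, \connm, \conne)$, the $\conne$-projection of a point $P$ onto a $\connm$-autoparallel submanifold $M$ is characterised uniquely by (i) lying in $M$, and (ii) the $\conne$-geodesic from $P$ to it being $\fish$-orthogonal to the tangent space of $M$. So it suffices to verify (i) and (ii) for $\Qt$ with respect to $M_{p,\cdot}$ and, by the symmetric argument with the two indices exchanged, for $\Ptt$ with respect to $M_{\cdot,q}$.

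Membership is a direct one-line computation. Substituting the Sinkhorn update $\utt_i = p_i / \sumn[m]{k} K_{ik}\vt_k$ into $\Qt_{ij} = \utt_i K_{ij}\vt_j$ gives $\sumn[m]{j}\Qt_{ij} = p_i$ immediately, so $\Qt \in M_{p,\cdot}$; the dual update yields $\sumn{i}\Ptt_{ij} = q_j$ symmetrically.

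Orthogonality is the analytic heart of the proof and reduces to a single algebraic observation: $\log\Qt_{ij} - \log\Pt_{ij} = \log(\utt_i/\ut_i)$ depends only on $i$. Up to an additive normalising constant, this expression is the $\conne$-representation at $\Qt$ of the tangent vector to the $\conne$-geodesic joining $\Pt$ and $\Qt$. Any vector $\eta$ tangent to $M_{p,\cdot}$, expressed in the $\connm$-representation, satisfies $\sumn[m]{j}\eta_{ij}=0$ for every $i$, so the Fisher pairing factorises as $\sumn{i}\log(\utt_i/\ut_i)\sumn[m]{j}\eta_{ij}$ and vanishes. The identical calculation with the indices swapped, using $\log\Ptt_{ij} - \log\Qt_{ij} = \log(\vtt_j/\vt_j)$ and tangent vectors of $M_{\cdot,q}$, handles the second projection.

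The only subtlety worth flagging—and the one place I would slow down—is the interaction between the choice of representative of a tangent vector and the overall normalisation of $\Pnm$: the pointwise log-difference is not literally a tangent vector, but represents the correct $\conne$-direction up to an additive constant, and that constant is annihilated by the constraint $\sum_{ij}\eta_{ij}=0$ that is automatic for any $\connm$-representation. Once this point is made precise, the proof is a transparent consequence of the Sinkhorn updates and the defining equations of $M_{p,\cdot}$ and $M_{\cdot,q}$.
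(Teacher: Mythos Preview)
The paper does not prove this proposition itself; it is attributed to Amari \etal and then immediately exploited, via the projection and Pythagorean theorems, to conclude that $\KL{P^*(p,q)}{\Pt}$ decreases monotonically. So there is no in-paper argument to compare your approach against.

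Your verification is correct and is the standard direct one. Membership and $\fish$-orthogonality are exactly what the projection theorem demands, and both collapse to the single observation that the Sinkhorn update alters only the row scaling, whence $\log\Qt_{ij}-\log\Pt_{ij}$ depends on $i$ alone. Your remark about the additive constant is precisely the point needed to pass from the pointwise log-difference to an honest $\conne$-tangent vector on the simplex. An equivalent route would be to check directly that $\Qt=\arg\min_{R\in M_{p,\cdot}}\KL{R}{\Pt}$, which the paper records as \eqref{eq:proj-Sinkhorn}, but that computation is no shorter and no more transparent than yours.
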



The Kullback-Leibler divergence, the canonical divergence on $(\Pnm,\fish,\connm,\conne)$, 
is given by
\[
	\KL{P}{Q} := \sum_{i,j} P_{ij} \log\frac{P_{ij}}{Q_{ij}}, \quad P,Q\in\Pnm.  
\]
Due to the projection theorem \cite[Theorem 3.9]{AmariNagaoka}, 
the $e$-projection $\Qt$ of $\Pt$ is given as
\begin{equation}	\label{eq:proj-Sinkhorn}
	\Qt = \underset{R\in M_{p,\cdot}}{\arg\min} \, \KL{R}{\Pt}.  
\end{equation}
In other words, $\Qt$ is the \textit{nearest} point to $\Pt$ lying in $M_{p,\cdot}$ 
in sense of the Kullback-Leibler divergence. 
In addition, 
from the Pythagorean theorem \cite[Theorem 3.8]{AmariNagaoka}, 
it holds that
\begin{align*}
	\KL{P^*(p,q)}{\Pt} 
	&= \KL{P^*(p,q)}{\Qt} + \KL{P^*(p,q)}{\Qt} \\
	&\geq \KL{P^*(p,q)}{\Qt}, 
\end{align*}
and the equality holds if and only if $\Pt=\Qt$. 
Combined with a similar argument for $\Ptt$, 
we obtain 
\[
	\KL{P^*(p,q)}{\Pt} \geq \KL{P^*(p,q)}{\Qt} \geq \KL{P^*(p,q)}{\Ptt},  
\]
and thus, the Kullback-Leibler divergence between $P^*(p,q)$ and $\Pt$ 
decreases strictly monotonically during the Sinkhorn algorithm (Figure \ref{fig4}). 

\begin{figure}[htbp]
\begin{center}
    	\includegraphics[width=0.5\textwidth]{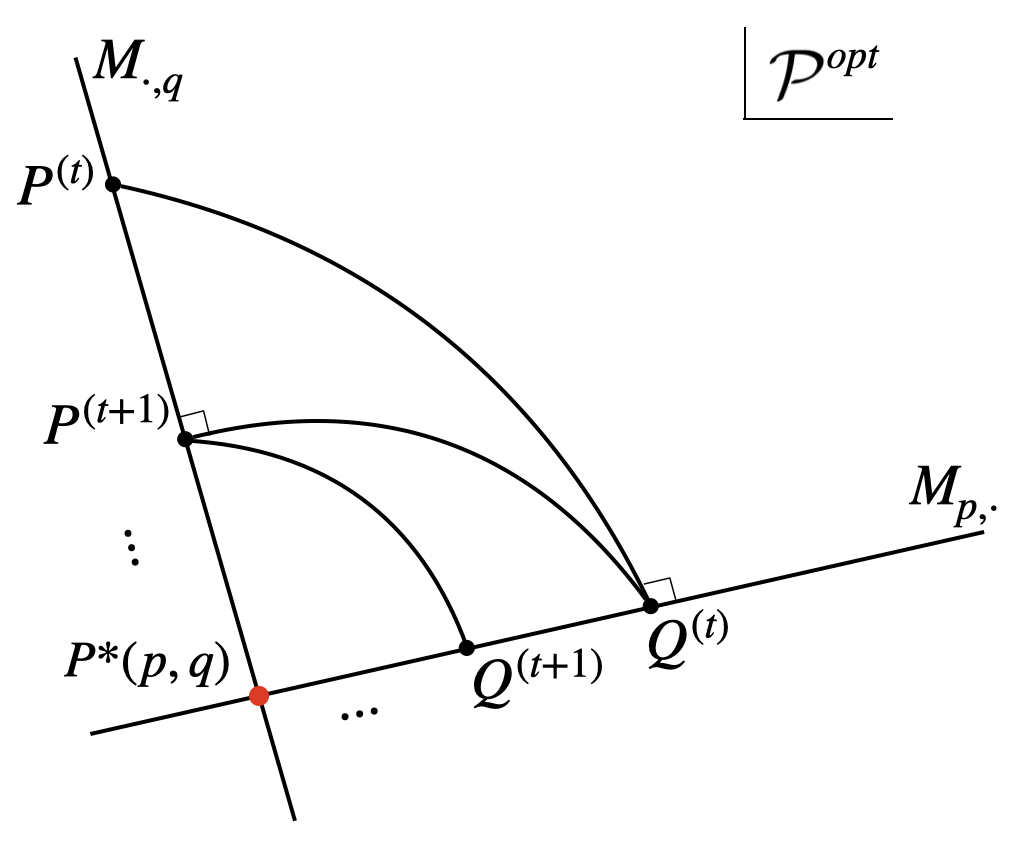}
	\caption{
		A schematic diagram for the geometric interpretation of the Sinkhorn algorithm. 
		The point $\Qt$ is the $e$-projection of $\Pt$ onto $M_{p,\cdot}$, 
		and $\Ptt$ is that of $\Qt$ onto $M_{\cdot,q}$. 
		All of $\Pt$ and $\Qt$ belong to $\calP^{opt}$ except for $P^{(0)}$.   
		They converge to the optimal plan $P^*(p,q)$, 
		which is located at the intersection of 
		$\calP^{opt}$ and $\Pi(p,q)=M_{p,\cdot}\cap M_{\cdot,q}$ (presented by the red point). 
	}
	\label{fig4} 
\end{center}
\end{figure}

\subsection{Barycenter problem} 
The minimized quantity $W_\lam(p,q)$ in the problem \eqref{eq:Cut-prob} 
is a strictly convex function on $(p,q)\in\calP[n-1]\times\calP[m-1]$
with respect to the standard affine structure. 
In fact, letting $p_t := (1-t)p_0+t p_1, q_t := (1-t)q_0+t q_1$, we have
\begin{align*}
	W_\lam(p_t, q_t) &= \inf_{P\in\Pi(p_t, q_t)}\Phi_\lam(P) \\
	&\leq \Phi_\lam\((1-t)P^*_0 + t P^*_1\) \\
	&\leq (1-t)\Phi_\lam(P^*_0) + t \Phi_\lam(P^*_1) \\
	&= (1-t)W_\lam(p_0, q_0) + t W_\lam(p_1, q_1),  
\end{align*}
where $P^*_0, P^*_1$ are optimal plans for $(p_0,q_0), (p_1,q_1)$ respectively. 
We used the convexity of $\Phi_\lam$ at the second inequality. 
The strict convexity of $W_\lam$ follows from that of $\Phi_\lam$. 

As $\lam$ tends to 0, $W_\lam(p,q)$ goes to 
the minimal cost $W(p,q)$ defined by \eqref{eq:Was-prob}. 
Assuming $m=n$ and some conditions for the cost matrix $C$, 
the quantity $W(p,q)$ can be regarded as a metric on $\calP_{n-1}$, 
called the \textit{Wasserstein distance} \cite{Villani03}. 
Then, one can consider the problem to compute the \textit{\Frechet~mean}
\begin{equation}	\label{eq:orig-bary}
	p^* := \arg\inf_{q\in\calP[n-1]} \sumn[N]{k} r_k \, W(p^k,q) , 
\end{equation}
of given points $p^1,\dots,p^N \in \calP[n-1]$ and weights $r_1,\dots,r_N\in\R_{++}$ 
with $\sumn[N]{k}r_k=1$.
Such a type of mean is called the \textit{Wasserstein barycenter}. 

Let us consider a relaxed variant of the problem \eqref{eq:orig-bary}:  
given $p^1,\dots,p^N \in \calP[n-1]$ and $r_1,\dots,r_N\in\R_{++}$ with $\sumn[N]{k}r_k=1$, 
\begin{equation}	\label{eq:bary}
	\textbf{Minimize } \sumn[N]{k} r_k \, W_\lam(p^k,q) \text{ under } q\in\calP[n-1] .
\end{equation}
Benamou \etal showed that this problem can be solved by a Sinkhorn-like algorithm \cite{BenamouCCNP}, 
which is presented in Algorithm \ref{alg2}. 
In order to devise the algorithm, 
they enlarged the domain of the problem to $(\Pnn)^N$ and considered the problem 
\[
	\textbf{Minimize } \sumn[N]{k} r_k \, \Phi_\lam(P^k) \text{ under } (P^1,\dots,P^N)\in \dM_1\cap \dM_2, 
\]
where 
\begin{align*}
	\dM_1 &= \Set{ (P^1,\dots,P^N)\in(\Pnn)^N | P^k\in M_{p^k,\cdot}, \any k }, \\
	\dM_2 &= \Set{ (P^1,\dots,P^N)\in(\Pnn)^N | \exs q\in\calP_{n-1} \st P^k\in M_{\cdot,q}, \any k }. 
\end{align*}
This problem is, in fact, equivalent to the original one \eqref{eq:bary}. 
Algorithm \ref{alg2} is obtained by computing the $\conne$-projection onto 
the pair of $\connm$-autoparallel submanifolds $\dM_1$ and $\dM_2$ of $(\Pnn)^N$, iteratively. 
Each iteration of the \textbf{while} loop costs $O(Nn^2)$
time. 

\begin{algorithm}[htbp]
	\caption{Benamou \textit{et al}.'s algorithm}		\label{alg2}                          
	\begin{algorithmic}
		\STATE $p^1,\dots,p^N\in\calP[n-1]$, $r_1,\dots,r_N\in\R_+$ with $\sum_k^N r_k = 1, 
			K\in\R^{n\times m}$: given
		\STATE $(u^{(1;0)},\dots,u^{(N;0)}),(v^{(1;0)},\dots,v^{(N;0)})\in\R^{n\times N}$: initial values
		\WHILE{until converge}
			\FOR{$k=1$ to $N$}
        			\FOR{$i=1$ to $n$}
        				\STATE $u^{(k;t+1)}_i \Leftarrow p^k_i \big/ \( \sumn{j} K_{ij} v^{(k;t)}_j \)$
        			\ENDFOR
			\ENDFOR
			\FOR{$j=1$ to $n$}
				\STATE $\tilde{p}_j \Leftarrow \prod_{k=1}^N \(\sumn{i}u^{(k;t+1)}_iK_{ij}\)^{r_k}$
			\ENDFOR
			\FOR{$k=1$ to $N$}
        			\FOR{$j=1$ to $n$}
					\STATE $v^{(k;t+1)}_j \Leftarrow \tilde{p}_j \big/  \( \sumn{i} u^{(k;t+1)}_i K_{ij} \)$
        			\ENDFOR
			\ENDFOR
			\STATE $t\Leftarrow t+1$ 
		\ENDWHILE
	\end{algorithmic}
\end{algorithm}


	\section{Geometric perspective of the problems and generalization of algorithms}

In this section, for given $p\in\calP_{n-1}$ and $q\in\calP_{m-1}$, we consider the minimization problem
\begin{equation}	\label{problem}
	\textbf{Minimize } \Phi(P) \text{ under } P \in \Pi(p,q),  
\end{equation}
where $\Phi$ is a strictly convex smooth function on $\Pnm$. 
We herein assume that $\Phi$ can be extended to a strictly convex smooth function $\tPhi$ 
on an open neighborhood $U$ of $\Pnm$ in $\Rnm$. 
This problem includes Cuturi's regularized optimal transport problem 
as the case where $\Phi=\Phi_\lam$. 
We investigate the dually flat structure suitable for the problem \eqref{problem}, 
and devise a procedure which generalize the Sinkhorn algorithm. 
We also address the generalized barycenter problem \eqref{eq:gen-bary}.

\subsection{Geometric perspective of the minimization problem}
On $\Pnm$, 
the Bregman divergence $D$ associated to $\Phi$ is given by
\begin{align*}
	D(P||Q) 
	:= \Phi(\eta(P)) - \Phi(\eta(Q)) - \<\pd{\Phi}{\eta}(\eta(Q)), \eta(P)-\eta(Q)\>, 
\end{align*}
where $\eta$ is an affine coordinate system on $\Pnm$ 
compatible with the standard affine structure. 
The partial derivative $\p\Phi/\p\eta$ defines the dual affine coordinate system $\theta$, i.e., 
\begin{equation}	\label{eq:dual-coord}
	\theta(P) := \pd{\Phi}{\eta}(\eta(P)), \quad \any P\in\Pnm. 
\end{equation}
The divergence $D$ induces a dually flat structure $(g,\nabla,\nabla^*)$ in the standard manner: 
\begin{align*}
	g_P(X,Y) &= -X_P Y_{Q} D(P||Q) \big|_{Q=P}, \\
	g_P(\nabla_XY,Z) &= -X_P Y_P Z_{Q} D(P||Q) \big|_{Q=P}, \\
	g_P(\nabla^*_XY,Z) &= -X_{Q} Y_{Q} Z_P D(P||Q) \big|_{Q=P}, 
\end{align*}
for $X,Y,Z\in \mathscr{X}(\Pnm)$ and $P\in\Pnm$, 
where $\mathscr{X}(\Pnm)$ denotes the set of vector fields on $\Pnm$. 
Here, the notation $X_P$ means that the vector field $X$ acts to the function $P\mapsto D(P||Q)$. 
The connections $\nabla$ and $\nabla^*$ are flat, 
and $\eta$ and $\theta$ are their affine coordinate systems, respectively. 
Moreover, $\eta$ and $\theta$ are mutually dual with respect to $g$, that is, 
they satisfy the relation
\begin{equation}	\label{eq:duality-of-coord}
	g\(\pd{}{\eta_i}, \pd{}{\theta^j}\) = \delta^i_j, 
\end{equation}
where $\delta^i_j$ denotes the Kronecker delta. 

In order to give a geometric presentation of the problem \eqref{problem}, 
we use the affine coordinate system $\eta$ defined by
\begin{align}	\label{eq:eta1}
\begin{cases}	
	&\etij(P) = P_{ij}, \quad \ran{i}{n-1}, \ran{j}{m-1}, \\
	&\etim(P) = \sumn[m]{j}P_{ij}, \quad \ran{i}{n-1}, \\
	&\etnj(P) = \sumn{i}P_{ij}, \quad \ran{j}{m-1}. 
\end{cases}
\end{align}
Under this coordinate system, the subset $\Pi(p,q)$ is presented as
\[
	\Pi(p,q) = M_{p,\cdot} \cap M_{\cdot, q}
\]
by using $\nabla$-autoparallel submanifolds defined by
\begin{align*}
	M_{p,\cdot} &:= \Set{ P\in\Pnm | \etim(P)=p_i, ~\ran{i}{n-1} }, \\
	M_{\cdot, q} &:= \Set{ P\in\Pnm | \etnj(P)=q_j, ~\ran{j}{m-1} }. 
\end{align*}
In particular, $\Pi(p,q)$ itself is also an $\nabla$-autoparallel submanifold. 
Then, the problem \eqref{problem}, if its solution lies in $\Pnm$, is reduced to
\begin{align}		\label{eq:critical}
\left\{ \begin{array}{l}
	\displaystyle \pd{\Phi}{\eta_{ij}}(\eta) = 0, \quad \ran{i}{n-1}, \ran{j}{m-1} \\[3mm]
	\displaystyle \eta_{im}=p_i, ~ \eta_{nj}=q_j, \quad \ran{i}{n}, \ran{j}{m} 
\end{array} \right. . 
\end{align}

From the relation \eqref{eq:dual-coord} and the inverse relation
\begin{align*}	
	\begin{cases}	
		&P_{ij} = \etij, \qquad \ran{i}{n-1}, \ran{j}{m-1}, \\
		&P_{im} = \etim - \sum_{j=1}^{m-1}\etij, \qquad \ran{i}{n-1}, \\
		&P_{nj} = \etnj - \sum_{i=1}^{n-1}\etij, \qquad \ran{j}{m-1}, \\
		&P_{nm} = 1 - \sum_{i=1}^{n-1}\etim - \sum_{j=1}^{m-1}\etnj 
			+ \sum_{i=1}^{n-1}\sum_{j=1}^{m-1}\etij,  
	\end{cases} 
\end{align*}
one can check that the dual affine coordinate system $\theta$ is given by 
\begin{align}	\label{eq:theta1}
\begin{cases}
	&\thij = S^{ij}(P) - S^{im}(P) - S^{nj}(P) + S^{nm}(P), \\
	&\hspace{25mm} \ran{i}{n-1}, \ran{j}{m-1} \\
	&\thim = S^{im}(P) - S^{nm}(P), \quad \ran{i}{n-1}, \\
	&\thnj = S^{nj}(P) - S^{nm}(P), \quad \ran{j}{m-1}.  
\end{cases}
\end{align}
Here, we denote by $S^{ij}:=\partial \tPhi/\partial A_{ij}$ for $1\leq i\leq n, 1\leq j\leq m$, 
where $\tPhi$ is an extension of $\Phi$ to $U\inc\Rnm$ and $A=(A_{ij})$ indicates an element of $\Rnm$. 
Thus, the critical condition 
\[
	\theta^{ij} = \pd{\Phi}{\eta_{ij}} = 0
\]
in \eqref{eq:critical} is rewritten as 
\[
	S^{ij}(P) - S^{im}(P) = S^{nj}(P) - S^{nm}(P), 
\]
which implies that $S^{ij}(P)-S^{im}(P)$ does not depend on $i$. 
By rearranging some terms, one can also check that $S^{ij}(P)-S^{nj}(P)$ is independent of $j$. 
As a consequence, there exist $\alpha\in\R^n$ and $\beta\in\R^m$ satisfying 
\begin{equation}	
	S^{ij}(P) = \alpha^i + \beta^j, \quad \ran{i}{n}, \ran{j}{m}.  
\end{equation}
From \eqref{eq:theta1}, we obtain 
\begin{align*}
	\theta^{im}(P) &= \alpha^i - \alpha^n, \quad \ran{i}{n-1}, \\
	\theta^{nj}(P) &= \beta^j - \beta^m, \quad \ran{j}{m-1},   
\end{align*}
and thus, the quantities $\theta^{im}$ and $\theta^{nj}$ are equivalent variables 
to $\alpha^i$ and $\beta^j$, up to additive constants $\alpha^n, \beta^m$. 

Let us consider a $\nabla^*$-autoparallel submanifold defined by
\[
	\calP^{opt} := \Set{\theta^{ij}=0, \quad 1\leq i\leq n-1, 1\leq j\leq m-1}. 
\]
Each element $P\in\calP^{opt}$ lies in $\Pi(p,q)$ for some $p\in\calP_{n-1},q\in\calP_{m-1}$, 
and then, $P$ is the solution of the problem \eqref{problem} for those $p,q$. 
Therefore, $\calP^{opt}$ is the set of optimal solutions for some source and target distributions. 
From the duality \eqref{eq:duality-of-coord} of the coordinate systems $\eta$ and $\theta$, 
$\calP^{opt}$ is orthogonal to $\Pi(p,q)$ with respect to $g$. 
Now, the problem \eqref{problem} is interpreted as 
the problem of finding the intersection point 
between the $\nabla^*$-autoparallel submanifold $\calP^{opt}$
and the $\nabla$-autoparallel submanifold $\Pi(p,q)$, which are mutually orthogonal. 

To describe such a type of problem, 
the concept of a mixed coordinate system is useful. 
\begin{proposition}[Mixed coordinate system]		\label{thm:mixed}
	Let $(M,g,\nabla,\nabla^*)$ be an $n$-dimensional dually flat manifold, 
	and $\eta=(\eta_i)$ and $\theta=(\theta^i)$ be 
	affine coordinate systems of $\nabla$ and $\nabla^*$, respectively. 
	Suppose that $\eta$ and $\theta$ are mutually dual. 
	Then, for $1\leq k\leq n$, 
	\[
		\xi = (\theta^1,\dots,\theta^k, \eta_{k+1},\dots,\eta_n)
	\]
	becomes a coordinate system on $M$, which is called a mixed coordinate system. 
\end{proposition}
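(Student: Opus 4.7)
The plan is to verify that the map $\xi: M \to \mathbb{R}^n$ assembling $(\theta^1,\dots,\theta^k,\eta_{k+1},\dots,\eta_n)$ has invertible Jacobian at every point, then invoke the inverse function theorem (and a partial-Legendre argument if a global statement is needed). The natural reference chart to differentiate against is $\eta$ itself, since the last $n-k$ components of $\xi$ are already $\eta$-coordinates; this makes the Jacobian matrix block-triangular and reduces the problem to one $k\times k$ principal block.

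Concretely, first I would compute the Jacobian $J=(\partial\xi^a/\partial\eta_b)$. Rows $k+1,\dots,n$ are just rows of the identity matrix, while rows $1,\dots,k$ have entries $\partial\theta^a/\partial\eta_b$. Hence $\det J$ equals the determinant of the $k\times k$ block $A=(\partial\theta^a/\partial\eta_b)_{1\le a,b\le k}$. The second step is the key identity $\partial\theta^a/\partial\eta_b = g_{ab}$, obtained by expanding $\partial/\partial\theta^j = \sum_k (\partial\eta_k/\partial\theta^j)\,\partial/\partial\eta_k$ in the duality relation $g(\partial/\partial\eta_i,\partial/\partial\theta^j)=\delta^j_i$, which identifies $(\partial\eta_k/\partial\theta^j)$ as the inverse of the metric matrix $(g_{ik})$, and therefore $(\partial\theta^a/\partial\eta_b)$ as the metric matrix itself. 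Thus $A$ is a principal submatrix of the positive-definite matrix $(g_{ij})$, hence itself positive-definite and in particular invertible. By the inverse function theorem, $\xi$ is a local diffeomorphism, so it qualifies as a coordinate system in a neighborhood of every point.

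For a truly global statement on the whole of $M$, I would use the dually flat structure: there exists a convex potential $\Phi(\eta)$ with $\theta^a=\partial\Phi/\partial\eta_a$. Fix the last $n-k$ coordinates $\eta_{k+1},\dots,\eta_n$ and consider $\Phi$ as a function of $\eta_1,\dots,\eta_k$ alone; the restriction inherits strict convexity (its Hessian is the block $(g_{ab})_{1\le a,b\le k}$, positive-definite as above), so its gradient $\eta\mapsto(\theta^1,\dots,\theta^k)$ is globally injective on each such slice. This fiberwise injectivity, together with the fact that the last $n-k$ components of $\xi$ distinguish the slices, gives global injectivity of $\xi$; combined with the local diffeomorphism property it upgrades $\xi$ to a global coordinate system on (the relevant simply-connected domain of) $M$.

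The main obstacle I anticipate is not the local Jacobian calculation, which is essentially a rearrangement of the duality condition, but rather isolating exactly what domain one needs for $\xi$ to be a genuine global chart. Locally, positive-definiteness of the Fisher-type metric is enough; globally, one must lean on convexity of $\Phi$ and the partial Legendre transform, which is why the identification $\partial\theta^a/\partial\eta_b=g_{ab}=\partial^2\Phi/\partial\eta_a\partial\eta_b$ carries most of the weight of the argument.
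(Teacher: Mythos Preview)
The paper does not actually supply a proof of this proposition; it is stated as a known fact from information geometry (the reference \cite{AmariNagaoka} is the implicit source) and then immediately put to use. So there is no ``paper's proof'' to compare against.

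Your argument is correct and is essentially the standard one. The Jacobian $\partial\xi/\partial\eta$ is block upper-triangular with lower-right block equal to the identity, so its determinant reduces to the $k\times k$ principal block $(\partial\theta^a/\partial\eta_b)_{1\le a,b\le k}=(g_{ab})_{1\le a,b\le k}$; positive-definiteness of $g$ makes this block invertible, and the inverse function theorem gives the local chart. Your global refinement via the partial Legendre transform of the potential $\Phi$ is also the standard way to promote this to a global coordinate system on a dually flat manifold, and is exactly what one finds in the Amari--Nagaoka text. Nothing is missing.
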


We use the mixed coordinate system $\xi=(\xi^{ij})$ defined by
\begin{align*}
\begin{cases}
	\xi^{ij}(P) = \theta^{ij}(P), \quad \ran{i}{n-1},\ran{j}{m-1}, \\
	\xi^{im}(P) = \eta^{im}(P), \quad \ran{i}{n-1}, \\
	\xi^{nj}(P) = \eta^{nj}(P), \quad \ran{j}{m-1}. 
\end{cases}
\end{align*} 
Under this coordinate system, the equation \eqref{problem} is presented by
\begin{align*}
	\xi^{ij}(P) = 0, \quad 
	\xi^{im}(P) = p_i, \quad 
	\xi^{nj}(P) = q_j, \quad
	\ran{i}{n-1},\ran{j}{m-1}. 
\end{align*} 
From the above discussion, the solution $P^*(p,q)$ of \eqref{problem} is,
if it exists, given by
\[
	S^{ij}(P^*(p,q)) = (\alpha^*)^i + (\beta^*)^j,  
\]
and $\alpha^*\in\R^n$ and $\beta^*\in\R^m$ are determined by the conditions
\[
	\eta_{im}(P^*(p,q))=p_i, \quad 
	\eta_{nj}(P^*(p,q))=q_j, \quad \ran{i}{n}, \ran{j}{m},  
\]
due to Proposition \ref{thm:mixed}.

\subsection{A Generalization of the Sinkhorn algorithm}
We herein assume that the convex function $\Phi$ has a smooth extension $\tPhi$ onto $U=\Rnm$, 
and that $S:\Rnm\to\R^{n\times m}$ is surjective. 
For example, the function $\Phi_\lam$ defined by \eqref{eq:phi-lambda} has the extension
\begin{align}
	\label{eq:extend-shannon}
	\tPhi_\lam(A) &:= \<A,C\> - \lam \tH(A), \\
	\tH(A) &:= -\sum_{i,j}A_{ij}\log A_{ij} + \(\sum_{i,j} A_{ij}-1\), \quad A\in\Rnm, \notag
\end{align}
which satisfies the assumption. 
These assumption are slightly too strong for introducing an information geometric structure; 
however, those assumptions are necessary for a straightforward generalization of the entropic regularization, 
including algorithms which solve it numerically. 

Before we discuss the generalization of the Sinkhorn algorithm, 
let us introduce a dual problem of the problem \eqref{problem}. 
The next lemma is a relaxed version of the Kantorovich duality \cite{Villani03}, 
which is a well-known theorem in the optimal transport theory. 
A proof of the lemma is located in Appendix. 

\begin{lemma}	\label{thm:dual}
	Let $\tPhi : \R^{n\times m}_{++} \to \R$ be a convex function. 
	For $p\in\calP[n-1]$, $q\in\calP[m-1]$, 
	\[
		\inf_{P\in\Pi(p,q)} \tPhi(P) = 
			\sup_{\alpha\in\R^n, \beta\in\R^m} \{\<p,\alpha\>+\<q,\beta\> -\tPhi^*(\alpha\oplus\beta)\} , 
	\]
	where $\tPhi^*:\R^{n\times m} \to \R\cup\{+\infty\}$ is the Legendre transform of $\tPhi$
	defined by 
	\[
		\tPhi^*(u):= \sup_{A\in\Rnm}\{ \<A,u\>-\tPhi(A) \}, \quad u\in\R^{n\times m}, 
	\]
	and $\alpha\oplus\beta \in \R^{n\times m}$ is defined by 
	$(\alpha\oplus\beta)^{ij} := \alpha^i + \beta^j$. 
\end{lemma}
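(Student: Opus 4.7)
The plan is a classical two-step convex-duality argument. Weak duality ($\leq$) drops out of the Fenchel inequality, and the gap is closed by a Fenchel--Rockafellar appeal together with a Slater-type qualification witnessed by the rank-one plan $P^\circ_{ij} := p_iq_j$.

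For weak duality, I would fix any $P\in\Pi(p,q)$ and any $\alpha\in\R^n$, $\beta\in\R^m$. Because $P$ has row sums $p$ and column sums $q$,
\begin{equation*}
\<p,\alpha\>+\<q,\beta\> \;=\; \sum_{i,j}P_{ij}(\alpha^i+\beta^j) \;=\; \<P,\alpha\oplus\beta\>.
\end{equation*}
The Fenchel inequality $\<P,\alpha\oplus\beta\>\leq\tPhi(P)+\tPhi^*(\alpha\oplus\beta)$ then gives
\begin{equation*}
\<p,\alpha\>+\<q,\beta\>-\tPhi^*(\alpha\oplus\beta)\;\leq\; \tPhi(P).
\end{equation*}
Taking sup over $(\alpha,\beta)$ on the left and inf over $P\in\Pi(p,q)$ on the right yields the ``$\leq$'' half of the asserted identity.

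For strong duality I would rewrite the primal as $\inf_{P\in\Rnm}\{\tPhi(P)+\iota_{\{(p,q)\}}(TP)\}$, where $T:\R^{n\times m}\to\R^{n}\oplus\R^{m}$ is the linear marginal operator and $\iota_A$ is the convex $\{0,+\infty\}$-indicator of a set $A$. Since the adjoint satisfies $T^{\top}(\alpha,\beta)=\alpha\oplus\beta$, the Fenchel--Rockafellar dual of this expression is exactly $\sup_{(\alpha,\beta)}\{\<p,\alpha\>+\<q,\beta\>-\tPhi^*(\alpha\oplus\beta)\}$. The standard sufficient condition for zero duality gap is that $(p,q)$ lies in the relative interior of $T(\mathrm{dom}\,\tPhi)$, and this is witnessed by $P^\circ_{ij}:=p_iq_j\in\Pi(p,q)\cap\Rnm=\Pi(p,q)\cap\mathrm{int}(\mathrm{dom}\,\tPhi)$. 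Hence Rockafellar's theorem applies and delivers the reverse inequality.

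The main obstacle is the strong-duality step: weak duality is a one-line Fenchel calculation, but ruling out a duality gap requires a genuine convex-analysis theorem whose hypothesis must be checked with care. In particular, because $\tPhi$ is defined only on the open positive orthant $\Rnm$, the transform $\tPhi^*$ may take the value $+\infty$ off a thin dual set, and the dual sup is effectively over the low-dimensional subspace of matrices of the form $\alpha\oplus\beta$ rather than over all of $\R^{n\times m}$. The interior-point witness $p_iq_j$ is what makes the qualification condition work; given it, the remaining bookkeeping is routine.
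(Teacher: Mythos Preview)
Your proposal is correct and follows essentially the same route as the paper: both reduce the lemma to an application of Fenchel--Rockafellar duality, the paper by choosing $\Theta(u)=\tPhi^*(-u)$ and $\Xi(u)$ the linear-on-the-$\oplus$-subspace function, you by writing the primal as $\tPhi(P)+\iota_{\{(p,q)\}}(TP)$ with the marginal operator $T$ and its adjoint $T^{\top}(\alpha,\beta)=\alpha\oplus\beta$. The only substantive difference is that you explicitly verify a Slater-type qualification via the product coupling $P^\circ_{ij}=p_iq_j\in\Pi(p,q)\cap\Rnm$, whereas the paper invokes an unqualified Fenchel--Rockafellar statement directly; your extra care here is appropriate and costs nothing.
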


In our setting, since $\tPhi$ is an extension of $\Phi$, 
the left hand side in {Lemma \ref{thm:dual}} is equal to that of \eqref{problem}. 
Let us note that the dual problem always has a solution, 
which is guaranteed by Lemma \ref{thm:dual-sol-exist} in Appendix. 
We also note that the assumption that the domain of $\tPhi$ is $\Rnm$ is essential for obtaining this lemma. 
Using the above lemma, we obtain the following theorem, 
which guarantees that the primal solution $P^*(p,q)$ is located on the interior of the domain, 
namely, 
\[
	\inf_{P\in\Pi(p,q)} \Phi(P) = \min_{P\in\Pi(p,q)} \Phi(P). 
\]

\begin{theorem}	\label{thm:onto}
	Suppose that $\tPhi:\Rnm\to\R$ is smooth and strictly convex 
	and that $S:\Rnm\to\R^{n\times m}$ is surjective. 
	There exists a unique solution $P^*(p,q) \in \Pi(p,q)$ of the minimization problem \eqref{problem} 
	 for each $p\in\calP[n-1], q\in\calP[m-1]$. 
	Moreover, a pair of dual solutions $(\alpha^*,\beta^*)\in\R^n\times\R^m$ satisfies
	\[
		S^{ij}(P^*(p,q)) = (\alpha^*)^i + (\beta^*)^j .
	\]
\end{theorem}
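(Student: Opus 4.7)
The plan is to reduce the primal problem to the dual problem via Lemma \ref{thm:dual}, produce the primal candidate by inverting $S = \nabla \tPhi$, and use the first-order optimality conditions of the dual to verify the marginal constraints.

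First I would invoke Lemma \ref{thm:dual-sol-exist} from the Appendix to obtain a dual maximizer $(\alpha^*, \beta^*) \in \R^n \times \R^m$ achieving
\[
F(\alpha^*, \beta^*) := \<p, \alpha^*\> + \<q, \beta^*\> - \tPhi^*(\alpha^* \oplus \beta^*) = \sup_{\alpha, \beta} F(\alpha, \beta).
\]
The hypotheses then combine nicely: strict convexity plus smoothness force $S$ to be injective, surjectivity is assumed, so $S : \Rnm \to \R^{n \times m}$ is a $C^1$ diffeomorphism. By the standard Legendre--Fenchel correspondence, $\tPhi^*$ is smooth on all of $\R^{n \times m}$ with $\nabla \tPhi^* = S^{-1}$. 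I would therefore define the primal candidate
\[
P^* := S^{-1}(\alpha^* \oplus \beta^*) \in \Rnm,
\]
so that tautologically $S^{ij}(P^*) = (\alpha^*)^i + (\beta^*)^j$, giving the ``moreover'' statement for free.

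Next, the task is to show $P^* \in \Pi(p, q)$. Differentiating $F$ in $\alpha^i$ and $\beta^j$ at the maximum and propagating the chain through $\nabla \tPhi^* = S^{-1}$ yields
\[
p_i = \sum_{j=1}^m P^*_{ij}, \qquad q_j = \sum_{i=1}^n P^*_{ij},
\]
which puts $P^*$ in $\Pi(p, q)$ (the normalization $\sum_{i,j} P^*_{ij} = 1$ follows automatically from $\sum_i p_i = 1$). Optimality then drops out of the Fenchel equality: the definition of $P^*$ together with the marginal identities gives
\[
\tPhi(P^*) + \tPhi^*(\alpha^* \oplus \beta^*) = \<P^*, \alpha^* \oplus \beta^*\> = \<p, \alpha^*\> + \<q, \beta^*\>,
\]
so $\Phi(P^*) = F(\alpha^*, \beta^*)$, which by Lemma \ref{thm:dual} equals the primal infimum. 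Uniqueness of the minimizer is immediate from the strict convexity of $\Phi$ on the convex set $\Pi(p, q)$.

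The main obstacle is ensuring that the primal infimum is actually attained at an interior point, i.e., at a strictly positive matrix in $\Pnm$ rather than only on the closure. The surjectivity hypothesis on $S$ is exactly what rules this out: it guarantees that every covector $\alpha^* \oplus \beta^*$ is the $\tPhi$-gradient at some genuine interior point, so $P^* \in \Rnm$ is well defined. A secondary subtlety is that the parametrization $(\alpha, \beta) \mapsto \alpha \oplus \beta$ has a one-dimensional kernel (the shift $(\alpha + c, \beta - c)$), but this is harmless because $F$ is invariant along that direction thanks to $\sum_i p_i = \sum_j q_j = 1$, and the marginal optimality equations above are consistent precisely because of that invariance.
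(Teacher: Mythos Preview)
Your argument is correct and takes a genuinely different route from the paper's proof. The paper starts on the primal side: it lets $P^*(p,q)$ be the minimizer on the closure $\overline{\Pi(p,q)}$, picks $A_*\in S^{-1}(\alpha^*\oplus\beta^*)$ by surjectivity, observes via Lemma~\ref{thm:dual} that $\alpha^*\oplus\beta^*$ is a subgradient of $\tPhi$ at \emph{both} $P^*(p,q)$ and $A_*$, and then runs a short strict-convexity contradiction to force $P^*(p,q)=A_*$. You instead work dual-first: you invoke the bijectivity of $S$ to make $\tPhi^*$ differentiable with $\nabla\tPhi^*=S^{-1}$, define the candidate $P^*$ directly, read off the marginal constraints from the first-order conditions of the dual objective, and close with the Fenchel equality. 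Your route is the more standard Legendre-duality argument and is slightly slicker; the paper's route is more bare-hands, never explicitly differentiating $\tPhi^*$ and relying only on subgradients plus strict convexity. One cosmetic point: ``$C^1$ diffeomorphism'' is a hair stronger than what strict convexity alone guarantees (the Hessian need not be nondegenerate), but all you actually use is that $\tPhi^*$ is differentiable with $\nabla\tPhi^*=S^{-1}$, which does follow from strict convexity and surjectivity of $S$, so the argument stands.
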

\begin{proof}
	Since $\Phi$ is strictly convex and the closure $\overline{\Pi(p,q)}$ is compact, 
	\[
		P^*(p,q) := \underset{P\in\Pi(p,q)}{\arg\inf} \Phi(P) 
	\]
	exists uniquely in $\overline{\Pi(p,q)}$. 
	From the surjectivity of $S$, we can choose $A_*\in S^{-1}(\alpha^*\oplus\beta^*)$. 
	We show that $P^*(p,q)=A_*$. 
	
	Due to Lemma \ref{thm:dual}, 
	the primal and dual solutions $P^*(p,q)$, $(\alpha^*,\beta^*)$ attain the equality 
	\[
		\hPhi(P^*(p,q)) = \<P^*,\alpha^*\oplus\beta^*\> - \tPhi^*(\alpha^*\oplus\beta^*),   
	\]
	where $\hPhi(P^*(p,q)):=\lim_{\tA\to P^*(p,q)}\tPhi(\tA)$.
	This implies that $\alpha^*\oplus\beta^*\in\R^{n\times m}$ is a subgradient of $\tPhi$ at $P^*(p,q)$. 
	On the other hand, by the choice of $A_*$, 
	$\alpha^*\oplus\beta^*$ is also a subgradient of $\tPhi$ at $A_*$. 
	Hence, for any $t\in(0,1)$, we have 
	\begin{align*}
		&\tPhi(tP^*(p,q)+(1-t)A_*) \\
		&\geq \< (tP^*(p,q)+(1-t)A_*) - P^*(p,q), \alpha^*\oplus\beta^* \> + \hPhi(P^*(p,q)) \\
		&= \< tP^*(p,q)+(1-t)A_*, \alpha^*\oplus\beta^* \> - \tPhi^*(\alpha^*\oplus\beta^*) \\
		&= t\( \<P^*(p,q),\alpha^*\oplus\beta^* \> - \tPhi^*(\alpha^*\oplus\beta^*) \) \\
			&\hs +(1-t)\(\<A_*, \alpha^*\oplus\beta^* \> - \tPhi^*(\alpha^*\oplus\beta^*) \) \\
		&= t\hPhi(P^*(p,q))+(1-t)\tPhi(A_*). 
	\end{align*}
	From the strict convexity of $\tPhi$, 
	if $P^*(p,q)\neq A_*$, it holds that
	\[
		\tPhi(tP^*(p,q)+(1-t)A_*) < t\hPhi(P^*(p,q))+(1-t)\tPhi(A_*),  
	\]
	which leads to a contradiction. 
	Hence, we obtain the former assertion
	\[
		P^*(p,q) = A_* \in \overline{\Pi(p,q)}\cap\Rnm = \Pi(p,q). 
	\]
	
	The latter one follows from the smoothness of $\tPhi$, 
	since $\alpha^*\oplus\beta^*$ is a subgradient of $\tPhi$ at $P^*(p,q)$. 
\end{proof}

The Sinkhorn algorithm is generalized 
as the iterative procedure consisting of 
\begin{enumerate}[(S-I)]
	\item the $\nabla^*$-projection 
		onto the $\nabla$-autoparallel submanifold $M_{p,\cdot}$ and
	\item the $\nabla^*$-projection 
		onto the $\nabla$-autoparallel submanifold $M_{\cdot,q}$. 
\end{enumerate}
Due to the pythagorean theorem, the Bregman divergence 
from the solution $P^*(p,q)$ will decrease monotonically for each iteration.  

In terms of dual affine coordinate systems, this procedure is written as follows. 
Suppose that $\Pt$ is given. 
Then, using the mixed coordinate system $(\eta_{im}, \theta^{ij}, \theta^{nj})$, 
the first projection $\Qt$ is designated by the coordinate
\begin{align}	\label{eq:projection}	
\begin{aligned}
	\etim(\Qt)&=p_i, \quad 1\leq i \leq n-1, \\
	\thij(\Qt)&=0, \quad \thnj(\Qt)=\thnj(\Pt), \quad 1\leq i\leq n-1, 1\leq j\leq m-1,   
\end{aligned}
\end{align}
For $\alpha\in\R^n, \beta\in\R^m$, let $A(\alpha,\beta)$ denote an element of $\Rnm$ satisfying 
\begin{equation}	\label{eq:alpha-beta}
	S^{ij}(A(\alpha,\beta)) = \alpha^i + \beta^j, \quad 1\leq i\leq n, 1\leq j\leq m. 
\end{equation}
Then, letting $\Pt=A(\at,\bt)$, the projection is given by 
\[
	\Qt = A(\att,\bt),
\] 
where $\att$ is determined by the equation 
\begin{align}	\tag*{(S-I)$'$}
	\etim(A(\att,\bt))&=p_i, \quad 1\leq i \leq n-1. 
\end{align}
Let us remark that the second equation $\thij=0$ in \eqref{eq:projection} is automatically satisfied
because of the equation \eqref{eq:alpha-beta}, 
and that the third equation $\thnj(\Qt)=\thnj(\Pt)$ means that $\bt$ is fixed. 
Similarly, the second projection $\Ptt=A(\att,\btt)$ is obtained by solving the equation 
\begin{align}	\tag*{(S-II)$'$}
	\etnj(A(\att,\btt))&=q_j, \quad 1\leq j\leq m-1. 
\end{align}

\begin{example}[Sinkhorn algorithm (Algorithm \ref{alg1})]
	The partial derivative of the function defined by \eqref{eq:extend-shannon} is
	\[
		S^{ij}(A) = C^{ij} + \lam \log A_{ij}, \quad A\in\Rnm. 
	\]
	Due to Theorem \ref{thm:onto}, we obtain that
	\begin{align*}
		P^*(p,q)_{ij} &= \exp\(\1{\lam}( \alpha_i + \beta_j - C^{ij})\) = A(\alpha,\beta) \\
			&= u_i K_{ij} v_j , 
	\end{align*}
	for some $\alpha\in\R^n, \beta\in\R^m$, 
	where we put $u_i=\exp(\alpha_i/\lam), v_j=\exp(\beta_j/\lam)$, 
	and $K_{ij}:=\exp(-C^{ij}/\lam)$. 
	Then, given $\Pt=(\ut )^T K\vt$, the equation (S-I)$'$ is reduced to
	\begin{align*}
		\utt_i \bigg(\sum_j K_{ij} \vt_j\bigg) =p_i, \quad 1\leq i \leq n, 
	\end{align*}
	and it is solved to
	\[
		\utt_i = p_i / (K \vt)_i.  
	\]
	This shows that our algorithm actually give a generalization of the Sinkhorn algorithm. 
\end{example}

\subsection{Geometric perspective for the generalized barycenter problem}
We assume the surjectivity of $S:\Rnm\to\R^{n\times m}$ also in this subsection. 
Then, a $\nabla$-affine coordinate system on the submanifold $\calP^{opt}$ 
is given by $(\etim,\etnj)=(p_i,q_j)$, since $\calP^{opt}$ is characterized by $\theta^{ij}=0$.  
Let $\vphi$ be a function on $\calP_{n-1}\times\calP_{m-1}$ defined by 
\begin{align*}
	\vphi(p,q) 
		&:= \inf_{P\in\Pi(p,q)} \Phi(P). 
\end{align*}
Then, $\vphi$ is strictly convex because of the strict convexity of $\Phi$.  
The function $\vphi$ is a restriction of $\Phi$ onto $\calP^{opt}$, 
and it is a potential function for the dually flat structure on $\calP^{opt}$. 
In fact, since $\thij(P^*(p,q))=0$, 
\begin{align}
	\pd{\vphi}{p_k}(p,q)
	&= \sumn[n-1]{i}\sumn[m-1]{j} \pd{\Phi}{\etij}(P^*(p,q))\pd{P_{ij}^*(p,q)}{p_k} \notag\\ 
		&\hs + \sumn[n-1]{i} \pd{\Phi}{\etim}(P^*(p,q))\pd{p_i}{p_k} 
		+ \sumn[m-1]{j} \pd{\Phi}{\etnj}(P^*(p,q))\pd{q_j}{p_k} \notag\\
	&= \sum_{i,j} \thij(P^*(p,q))\pd{P_{ij}^*(p,q)}{p_k} + \sum_i \thim(P^*(p,q))\delta_i^k \notag\\
	&= \theta^{km}(P^*(p,q)). 
	\label{eq:restrict-potential}
\end{align}
Similarly, another relation $\p\vphi/\p q_l = \theta^{nl}$ follows.  

We herein assume that $m=n$, 
and consider the barycenter problem \eqref{eq:gen-bary}. 
If the solution lies in the interior of the domain, 
this problem is interpreted as solving the critical condition
\[
	\pd{}{q_j} \( \sumn[N]{k} r_k \, \varphi(p^k,q) \) = 0, \quad 1 \leq j \leq n,
\]
due to the convexity of $\varphi$. 
Because of the relation \eqref{eq:restrict-potential}, 
we can further interpret the problem as the system of equations on $(P^1,\dots, P^N) \in (\Pnn)^N$: 
\begin{align}	\label{eq:critical-bary}
 	\left\{ \begin{array}{l}
		\displaystyle \eta_{in}(P^k) = p^k_i, \\[1.5ex]
		\displaystyle \eta_{nj}(P^1) = \cdots = \eta_{nj}(P^N) \ (= q_j), \\[1.0ex]
		\displaystyle \sumn[N]{k} r_k \, \theta^{nj}(P^k) = 0, \\[3.0ex]
		\displaystyle \theta^{ij}(P^k) = 0, 
	\end{array} \right.
	\qquad 1 \leq i,j \leq n-1, 1 \leq k \leq N. 
\end{align}

In order to illustrate the geometric view of the barycenter problem, 
we consider the dually flat structure $(\cg,\cdel,\cdel^*)$ on $(\Pnn)^N$ 
induced by the convex function
\[
	\cPhi(P^1,\dots,P^N) := \sumn[N]{k} r_k \Phi(P^k). 
\]
We choose a $\cdel$-affine coordinate system given by
\begin{align}	\label{eq:eta2}
\begin{cases}
	&\Etij(P^1,\dots,P^N) = \etij(P^k), \quad 1\leq k \leq N, \\
	&\Etin(P^1,\dots,P^N) = \etin(P^k), \quad 1\leq k \leq N, \\
	&\Etnj(P^1,\dots,P^N) = \etnj(P^k) - \etnj(P^{k+1}), \quad 1\leq k \leq N-1, \\
	&\Etnj[N](P^1,\dots,P^N) = \sumn[N]{k} \etnj(P^k), 
\end{cases}
\end{align}
for $1\leq i,j \leq n-1$, where $\etij, \etim, \etnj$ are defined in \eqref{eq:eta1}. 
Then, the dual affine coordinate $\Theta$ is, 
from the general relation $\Theta = \p\cPhi/\p H$, 
given by
\begin{align}	\label{eq:theta2}
\begin{cases}
	&\Thij = r_k \thij(P^k), \quad 1 \leq k \leq N,\\
	&\Thin = r_k \thin(P^k), \quad 1 \leq k \leq N, \\
	&\Thnj(P) = r_k \thnj(P^k) - r_{k+1} \thnj(P^{k+1}), \quad 1\leq k \leq N-1, \\
	&\Thnj[N](P) = \sumn[N]{k} r_k \thnj(P^k).  
\end{cases}
\end{align}
We treat another minimization problem 
\begin{equation}	\label{problem3}
	\textbf{Minimize } \cPhi(P^1,\dots,P^N) \text{ under } (P^1,\dots,P^N)\in\dM_1\cap\dM_2, 
\end{equation}
and show that this problem is equivalent to \eqref{eq:gen-bary}. 
Here, $\dM_1$ and $\dM_2$ are $\cdel$-autoparallel submanifolds of $(\Pnn)^N$ defined by
\begin{align*}
	\dM_1 &:= \Set{(P^1,\dots,P^N)\in(\Pnn)^N | \Etin(P^1,\dots,P^N)=p^k_i, 
		\begin{array}{l} 1\leq i\leq n-1,\\ 1\leq k\leq N \end{array} }, \\
	\dM_2 &:= \Set{(P^1,\dots,P^N)\in(\Pnn)^N | \Etnj(P^1,\dots,P^N)=0, 
		\begin{array}{l} 1\leq j\leq n-1,\\ 1\leq k\leq N-1 \end{array} }.  
\end{align*}
Making use of the dual coordinate system defined in \eqref{eq:eta2} and \eqref{eq:theta2},  
 the critical condition \eqref{eq:critical-bary} is interpreted as
\begin{align}	\label{eq:critical-bary2}
	\left\{ \begin{array}{l}
		\displaystyle \Etin(P^1,\dots,P^N) = p^k_i, \quad 1 \leq k \leq N, \\[2mm]
		\displaystyle \Etnj(P^1,\dots,P^N) = 0, \quad 1 \leq k \leq N-1, \\[2mm]
		\displaystyle \Thnj[N](P^1,\dots,P^N) = 0, \\[2mm]
		\displaystyle \Thij(P^1,\dots,P^N) = 0, \quad 1 \leq k \leq N, \quad 1 \leq i,j \leq n-1,  
	\end{array} \right. 
\end{align} 
which is no other than the critical condition  
\begin{align*}
	(P^1,\dots,P^N)\in\dM_1\cap\dM_2, \quad 
	\pd{\cPhi}{\Etnj[N]} = 0, \quad \pd{\cPhi}{\Etij} = 0
\end{align*} 
of the problem \eqref{problem3}. 
In this mean, the problem \eqref{problem3} is another form of \eqref{eq:gen-bary}. 
Hence, via the problem \eqref{problem3}, 
the barycenter problem \eqref{eq:gen-bary} is interpreted as 
the problem finding the intersection point 
between $\cdel$-autoparallel submanifold $\dM_1\cap\dM_2$
and $\cdel^*$-autoparallel submanifold 
\[
	\Set{\Thnj[N] = 0, \Thij = 0, 1\leq i,j\leq n-1, 1\leq k\leq N}.  
\]

\subsection{Generalized algorithm for computing barycenter}
Analogously to the Sinkhorn algorithm, 
the critical condition \eqref{eq:critical-bary2} 
can be solved by an iterative procedure, 
which is implemented as 
\begin{enumerate}[(B-I)]
	\item the $\cdel^*$-projection $\Qt$ of $\Pt$
		onto the $\cdel$-autoparallel submanifold $\dM_1$, and
	\item the $\cdel^*$-projection $\Ptt$ of $\Qt$
		onto the $\cdel$-autoparallel submanifold $\dM_2$. 
\end{enumerate}
In fact, due to the pythagorean theorem, 
the Bregman divergence on $(\Pnn)^N$ associated to $\cPhi$ 
monotonically decreases with the procedure. 

Suppose that $\Pt=(\PtN{1},\dots,\PtN{N})$ satisfies 
\[
	\Thij(\Pt) = 0, ~\Thnj[N](\Pt) = 0, \quad 1\leq i,j\leq n-1, 1\leq k\leq N. 
\] 
Then, the $\cdel^*$-projection $\Qt$ of $\Pt$ onto $\dM_1$ is presented 
by a mixed coordinate system of \eqref{eq:eta2} and \eqref{eq:theta2} as 
\begin{align}	\tag*{(B-I)$'$}
\begin{aligned}
	\Etin(\Qt)=p^k_i, \quad \Thij(\Qt)=0 (=\Thij(\Pt)), \quad \Thnj(\Qt)=\Thnj(\Pt), \\
	 1\leq i,j\leq n-1, 1\leq k\leq N. 
\end{aligned}
\end{align}
On the other hand, the second projection (B-II) onto $\dM_2$ is given by 
\begin{align} 	\tag*{(B-II)$'$}
\begin{aligned}
	&\Etnj(\Ptt)=0, \quad 1\leq k\leq N-1, \\
	&\Thij(\Ptt)=0 (=\Thij(\Qt)), \quad \Thin(\Ptt)=\Thin(\Qt), \quad 1\leq k\leq N, \\
	&\Thnj[N](\Ptt)=0 (=\Thnj[N](\Qt)), 
\end{aligned}
\end{align}
where $1\leq i,j\leq n-1$. 

In terms of $\eta$ and $\theta$ given in \eqref{eq:eta1} and \eqref{eq:theta1}, 
these results are represented as follows: 
\begin{itemize}
	\item finding $\Qt=(\QtN{1},\dots,\QtN{N})$ solving 
		\begin{equation}	\tag*{(B-I)$''$}
			\etin(\QtN{k})=p^k_i, \quad \thij(\QtN{k})=0, \quad \thnj(\QtN{k})=\thnj(\PtN{k}),  
		\end{equation}
		for $1\leq i,j \leq n-1, 1\leq k\leq N$, and 
	\item finding $\Ptt=(\PttN{1},\dots,\PttN{N})$ solving 
		\begin{align}	\tag*{(B-II)$''$}
		\begin{aligned}
			&\etnj(\PttN{1})=\cdots=\etnj(\PttN{N}), \quad \sum_{l=1}^N r_l \thnj(\PttN{l}) = 0, \\
			&\thij(\PttN{k})=0, \quad \thin(\PttN{k})=\thin(\QtN{k}),   
		\end{aligned}
		\end{align}
		for $1\leq i,j \leq n-1, 1\leq k\leq N$.  
\end{itemize}
By using the representation \eqref{eq:alpha-beta}, we can further reduce each procedure. 
Letting 
\[
	\PtN{k} = A\(\atN{k},\btN{k}\), 
\]
the algorithm is written as 
\begin{itemize}
	\item finding $(\attN{1},\dots,\attN{N})$ solving 
		\begin{equation}	\tag*{(B-I)$'''$}
			\etin\(A\(\attN{k},\btN{k}\)\)=p^k_i, 
		\end{equation}
		for $1\leq i \leq n-1, 1\leq k\leq N$, and 
	\item finding $(\bttN{1},\dots,\bttN{N})$ solving 
		\begin{align}	\tag*{(B-II)$'''$}
		\begin{aligned}
			&\etnj\(A\(\attN{1},\bttN{1}\)\)=\cdots=\etnj\(A\(\attN{N},\bttN{N}\)\), \\
			&\quad \sum_{k=1}^N r_k \(\bttN{k}\)^j = 0, 
		\end{aligned}
		\end{align}
		for $1\leq j \leq n-1, 1\leq k\leq N$. 
\end{itemize}

When the system of $N$ equations (B-II)$'''$ is hard to solve, 
one can avoid that difficulty by splitting the projection onto $\dM_2$. 
Let us consider the $\cdel^*$-projection onto 
\[
	\dM_{2;k} := \Set{(P^1,\dots,P^N)\in(\Pnn)^N | \Etnj(P^1,\dots,P^N)=0, 1\leq j\leq n-1}, 
\]
for $1\leq k\leq N-1$. 
Since
\[
	\dM_2 = \bigcap_{k=1}^{N-1} \dM_{2;k}, 
\]
due to the pythagorean theorem, 
a series of iterative $\cdel^*$-projections onto the $\cdel$-autoparallel submanifolds $\dM_{2;k}$
decreases monotonically the divergence from $\dM_2$. 
Hence, instead of computing (B-II)$'''$ directly, we can utilize alternative procedure
by solving
\begin{align}	\tag*{(B-II)$_k$}
\begin{aligned}
	&\etnj\(A\(\attN{k},\bttN{k}\)\)=\etnj\(A\(\attN{k+1},\bttN{k+1}\)\), \\
	& r_k \(\bttN{k}\)^j + r_{k+1} \(\bttN{k+1}\)^j = -\sum_{l\neq k, k+1}r_l \(\bttN{l}\)^j, 
\end{aligned}
\end{align}
with fixing $(\attN{1},\dots,\attN{N})$ and $(\bttN{1},\dots,\bttN{k-1},\bttN{k+2},\dots,\bttN{N})$.

\subsection{Another geometric perspective with 1-homogeneous extension}		\label{sec3-5}
Let us introduce another generalization of Amari-Cuturi's framework, 
using 1-homogeneous extension of the convex function $\Phi$ on $\Pnm$, 
which works well to solve the Tsallis entropic regularized optimal transport problem. 
Fix $\tq>0$ with $\tq\neq 1$ and $\lam>0$.   
We consider the problem
\[
	\textbf{Minimize } \Phi(P)=\<C,P\> - \lam \calT_{\tq}(P) \text{ under } P \in \Pi(p,q),  
\]
where $\calT_{\tq}$ denotes the $\tq$-Tsallis entropy, which is given by
\[
	\calT_{\tq}(P) = \1{\tq-1}\(1-\sum_{i,j} P_{ij}^{\tq}\),  
\]
This problem is originally considered in \cite{MuzellecNPN}. 

With the simple extension $A\mapsto \sum_{i,j} (\tq-1)^{-1}(1-A_{ij}^{\tq})$ of the Tsallis entropy, 
the range of 
\[
	S^{ij}(A) = \pd{\tPhi}{A_{ij}} = C^{ij} + \frac{\lam \tq}{\tq-1} A_{ij}^{\tq-1}
\]
becomes the subset 
\[
	\Set{u\in\R^{n\times m}|u_{ij}>C^{ij}},
\]  
which violates the assumption that $S$ is surjective. 
We herein consider the extension of $\calT_{\tq}$ defined by
\[
	\tT_{\tq}(A) = \1{\tq-1}\sum_{i,j}\( A_{ij} - \bigg(\sum_{k,l} A_{kl}\bigg)^{1-\tq} A_{ij}^{\tq}\), \quad A\in\Rnm. 
\]
which is 1-homogeneous, that is, $\tT_{\tq}(tP)=t\calT_{\tq}(P)$ for any $t>0$ and $P\in\Pnm$. 
Then, the associated mapping $S : \Rnm \to \R^{n\times m}$ is given by
\begin{equation}	\label{eq:tsallis-score}
	S^{ij}(t P) = S^{ij}(P) = C^{ij} + \frac{\lam}{\tq-1}\bigg( \tq P_{ij}^{\tq-1} + (1-\tq)\sum_{k,l} P_{kl}^{\tq} - 1 \bigg),  
\end{equation}
for $t>0, P\in\Pnm$. 

In general, a 1-homogeneous convex function induces a dually flat structure,  
which is called Dawid's decision geometry \cite{Dawid07}. 
For a 1-homogeneous $\tPhi$, its derivative $S:\Rnm \to \R^{n\times m}$ induces a mapping
from $\Pnm$ to $\R^{n\times m} / \<1_{nm}\>$. 
Here, $\R^{n\times m} / \<1_{nm}\>$ denotes a quotient vector space
divided by
\[
	u \sim v \iff u-v = c\, 1_{nm} \text{ for some $c\in\R$}, 
\]
where $1_{nm}$ denotes the matrix whose entries are all 1. 
Instead of Theorem \ref{thm:onto}, in this case, one can utilize the next theorem, 
whose proof is located in Appendix. 
\begin{theorem}	\label{thm:happy}
	Suppose that $\tPhi:\Rnm\to\R$ is 1-homogeneous and is strictly convex on $\Pnm$, 
	and that its derivative $S : \Rnm \to \R^{n\times m}$ induces a bijection between
	\[
		\Pnm \cong \R^{n\times m}/\<1_{nm}\>.  
	\]
	Then, there exists a unique solution $P^*(p,q) \in \Pi(p,q)$ of the minimization problem \eqref{eq:gen-prob} 
	 for each $p\in\calP[n-1], q\in\calP[m-1]$. 
	Moreover, there exists a pair $(\alpha^*,\beta^*)\in\R^n\times\R^m$ satisfying
	\[
		S(P^*(p,q))^{ij} = (\alpha^*)^i + (\beta^*)^j.
	\]
\end{theorem}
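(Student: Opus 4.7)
The plan is to mirror the proof of Theorem~\ref{thm:onto}, replacing surjectivity of $S$ by the hypothesized bijection between $\Pnm$ and $\R^{n\times m}/\<1_{nm}\>$, and carefully tracking the $1_{nm}$-ambiguity introduced by $1$-homogeneity. First I would establish existence and uniqueness of a minimizer $P^*(p,q)\in\overline{\Pi(p,q)}$ from compactness of the feasible set together with continuity and strict convexity of $\tPhi$ on $\Pnm$. Invoking Lemma~\ref{thm:dual} together with Lemma~\ref{thm:dual-sol-exist} would then yield a dual optimizer $(\alpha^*,\beta^*)\in\R^n\times\R^m$ saturating the duality identity; substituting the marginal constraints $\sum_j P^*(p,q)_{ij}=p_i$ and $\sum_i P^*(p,q)_{ij}=q_j$ rearranges it to
\[
	\tPhi(P^*(p,q)) + \tPhi^*(\alpha^*\oplus\beta^*) = \<P^*(p,q),\,\alpha^*\oplus\beta^*\>,
\]
i.e., $\alpha^*\oplus\beta^*\in\partial\tPhi(P^*(p,q))$.

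Next, the bijection hypothesis would produce a unique $A_*\in\Pnm$ with $S(A_*)$ congruent to $\alpha^*\oplus\beta^*$ modulo $\<1_{nm}\>$, so $S(A_*)=\alpha^*\oplus\beta^*+c\,1_{nm}$ for some $c\in\R$. I would absorb $c$ symmetrically via $\alpha^*\mapsto\alpha^*+(c/2)1_n$ and $\beta^*\mapsto\beta^*+(c/2)1_m$, checking that both sides of the duality identity transform consistently — here one uses $\sum p=\sum q=1$ together with the behavior of $\tPhi^*$ on $\<1_{nm}\>$-shifts of the image of $S$, which is dictated by $1$-homogeneity — so that we may arrange $S(A_*)=\alpha^*\oplus\beta^*$ outright. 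Then $\alpha^*\oplus\beta^*$ is a subgradient of $\tPhi$ at both $A_*$ and $P^*(p,q)$.

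Then I would repeat the convex-combination argument of Theorem~\ref{thm:onto}: the subgradient inequalities at the two endpoints give
\[
	\tPhi(tP^*(p,q)+(1-t)A_*) \geq t\,\tPhi(P^*(p,q)) + (1-t)\tPhi(A_*),
\]
while convexity supplies the reverse inequality. If $P^*(p,q)\neq A_*$, the interior of the segment lies in $\Pnm$ — because $A_*$ has strictly positive entries — and strict convexity on $\Pnm$ forces a strict inequality, a contradiction. Hence $P^*(p,q)=A_*\in\Pi(p,q)$ and $S(P^*(p,q))=\alpha^*\oplus\beta^*$.

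The principal obstacle is precisely the $1$-homogeneity: $\tPhi$ is affine along every ray through the origin, so strict convexity cannot be invoked globally on $\Rnm$ as in Theorem~\ref{thm:onto}. The two places where care is needed are (i) justifying the representative shift of $(\alpha^*,\beta^*)$ by multiples of $1_n$ and $1_m$ that aligns $S(A_*)$ exactly with $\alpha^*\oplus\beta^*$ without breaking dual optimality, and (ii) confining the final convex comparison to the cross-section $\Pnm$, using that $A_*$ sits in its interior while both candidate minimizers lie on the normalized hyperplane $\sum_{ij}P_{ij}=1$.
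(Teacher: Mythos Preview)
Your three--step outline matches the paper's proof almost exactly: (1) obtain a minimizer $P^*(p,q)\in\overline{\Pi(p,q)}$ and a dual pair $(\alpha^*,\beta^*)$ with $\alpha^*\oplus\beta^*\in\partial\tPhi(P^*)$; (2) produce $A_*\in\Pnm$ with $S(A_*)=\alpha^*\oplus\beta^*+c\,1_{nm}$ for some $c\in\R$; (3) conclude $P^*=A_*$ by strict convexity on $\Pnm$. Steps~(1) and~(3) are essentially the paper's argument; note that in step~(1) the paper uses the $1$--homogeneous duality Lemma~\ref{thm:1-homDual} rather than Lemma~\ref{thm:dual} directly, because it records the crucial fact $\tPhi^*(\alpha^*\oplus\beta^*)=0$.

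The genuine gap is in step~(2). You propose to \emph{absorb} $c$ by replacing $(\alpha^*,\beta^*)$ with $(\alpha^*+\tfrac{c}{2}1_n,\,\beta^*+\tfrac{c}{2}1_m)$ and ``checking that both sides of the duality identity transform consistently''. They do not. For a $1$--homogeneous $\tPhi$ the conjugate $\tPhi^*$ takes only the values $0$ and $+\infty$; since $S(A_*)$ lies in the image of $S$ one has $\tPhi^*(S(A_*))=0$, so after the shift the dual objective $\langle p,\alpha\rangle+\langle q,\beta\rangle-\tPhi^*(\alpha\oplus\beta)$ changes by exactly $c$ (using $\sum_i p_i=\sum_j q_j=1$). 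Hence if $c\neq 0$ the shifted pair is either strictly better than the original optimum (impossible) or strictly worse (and then no longer saturates the duality identity, so the subgradient conclusion of step~(1) is lost). You therefore cannot ``absorb'' $c$; you must \emph{prove} $c=0$.

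This is precisely what the paper does, via the auxiliary Lemma~\ref{thm:score}, which establishes for $1$--homogeneous $\tPhi$ the Euler identity $\tPhi(A)=\langle A,S(A)\rangle$ together with the inequality $\tPhi(A)\geq\langle A,S(\tilde A)\rangle$. Applying the inequality with $A\to P^*(p,q)$, $\tilde A=A_*$ and comparing with $\tPhi(P^*)=\langle P^*,\alpha^*\oplus\beta^*\rangle$ gives $c\le 0$; applying $\tPhi^*(\alpha^*\oplus\beta^*)=0$ at $A=A_*$ and the Euler identity for $A_*$ gives $c\ge 0$. Your phrase ``the behavior of $\tPhi^*$ on $\langle 1_{nm}\rangle$--shifts of the image of $S$'' points in the right direction, but the argument needs Lemma~\ref{thm:score} (or an equivalent) made explicit, not an appeal to a nonexistent invariance.
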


Applying Theorem \ref{thm:happy} to the mapping \eqref{eq:tsallis-score}, 
there are $\alpha\in\R^n, \beta\in\R^m$ satisfying
\begin{gather*}
	C^{ij} + \lam\bigg( \frac{\tq}{\tq-1} P^*(p,q)_{ij}^{\tq-1} - \kappa \bigg) = \alpha^i + \beta^j, \\
	\kappa := \sum_{k,l} P^*(p,q)_{kl}^{\tq} + \1{\tq-1}. 
\end{gather*}
We can include $\kappa$ in $(\alpha,\beta)$ 
by replacing $\alpha^i$ with $\alpha^i+\lam\kappa$. 
Thus, the optimal plan has the form as
\begin{align*}
	P^*(p,q)_{ij} = \( \frac{\tq-1}{\lam \tq}\(\alpha^i+\beta^j - C^{ij}\) \)^{\1{\tq-1}}.  
\end{align*}
The first projection (S-I) in the generalized Sinkhorn algorithm is interpreted in this case as solving 
\[
	\sumn[m]{j}\( \frac{\tq-1}{\lam \tq}\(\alpha^i+\beta^j - C^{ij}\) \)^{\1{\tq-1}} = p_i, \quad 1\leq i\leq n, 
\]
for the variable $\alpha$ with fixing $\beta$. 
In practice, this can be solved by the Newton method, for example. 
The second projection (S-II) is similarly given by solving
\[
	\sumn[n]{i}\( \frac{\tq-1}{\lam \tq}\(\alpha^i+\beta^j - C^{ij}\) \)^{\1{\tq-1}} = q_j, \quad 1\leq j\leq m, 
\]
with fixing $\alpha$. 

For the barycenter problem, the procedure (B-I) is similarly given as the equation
\[
	\sumn[n]{j}\( \frac{\tq-1}{\lam \tq}\((\alpha^k)^i+(\beta^k)^j - C^{ij}\) \)^{\1{\tq-1}} = p^k_i, \quad 
	1\leq i\leq n, 1\leq k\leq N, 
\] 
for $(\alpha^1,\dots,\alpha^N)$. 
The procedure (B-II) is given by
\begin{align*}
	\left\{ \begin{array}{l}
        	\displaystyle 
		\sumn[n]{i}\( \frac{\tq-1}{\lam \tq}\((\alpha^1)^i+(\beta^1)^j - C^{ij}\) \)^{\1{\tq-1}}
        	= \cdots
        	= \sumn[n]{i}\( \frac{\tq-1}{\lam \tq}\((\alpha^N)^i+(\beta^N)^j - C^{ij}\) \)^{\1{\tq-1}}, \\ 
        	\displaystyle \sumn[N]{k}r_k(\beta^k)^j = 0, \qquad 1\leq j\leq n. 
	\end{array} \right. 
\end{align*}
However, this equation is hard to solve. 
As an alternative way, we can make use of the procedure (B-II)$_k$, which is the equation
\begin{align*}
	\left\{ \begin{array}{l}
        	\displaystyle 
        	\sumn[n]{i}\( \frac{\tq-1}{\lam \tq}\((\alpha^k)^i+(\beta^k)^j - C^{ij}\) \)^{\1{\tq-1}}
        	= \sumn[n]{i}\( \frac{\tq-1}{\lam \tq}\((\alpha^{k+1})^i+(\beta^{k+1})^j - C^{ij}\) \)^{\1{\tq-1}}, \\ 
		\displaystyle r_k(\beta^k)^j + r_{k+1}(\beta^{k+1})^j = -\sum_{l\neq k,k+1}r_l(\beta^l)^j , 
		\qquad 1\leq j\leq n, 
	\end{array} \right. 
\end{align*}
only for $(\beta^k)^j$ and $(\beta^{k+1})^j$. 
Deleting the variable $(\beta^{k+1})^j$ by using the second relation, 
the solution $(\beta^k)^j$ can also be computed by the Newton method.


	\section{Geometrical perspective of the problems with a weakened assumption}

In this section, we consider a more general case.  
As in the previous section, we assume that a strictly convex function $\Phi:\Pnm\to\R$ has 
a smooth extension $\tPhi$ to $\Rnm$. 
In this section, 
we weaken the assumption that the derivative $S:\Rnm\to\R^{n\times m}$ of $\tPhi$ is surjective, 
and consider a situation where each $S^{ij}$ is not necessarily unbounded from above. 
The lack of surjectivity of $S$ implies that the optimal plan can be located on the boundary of $\Pnm$. 
As seen in the above, the mapping $S$ gives a correspondence between the primal and dual domains, 
described in mutually dual affine coordinates. 
Thus, a boundary of the dual domain
corresponds to the boundary of the primal domain $\Pnm$. 
Such a situation makes the problems difficult; however, it also can provide an advantage. 
It allows some masses of the optimal plan to be strictly zero, 
and thus, it can avoid a blurred image, 
which has been a problem of the original entropic regularization.

\subsection{Duality and subdifferential}
Due to the convex analysis, we obtain the picture as follows. 
Let $\hPhi:\R^{n\times m}\to\R\cup\{+\infty\}$ be 
the continuous extension of $\tPhi:\R_{++}^{n\times m}\to\R$, 
that is, 
\begin{align*}
	\hPhi(A) = \left\{ \begin{array}{l}
		\displaystyle \tPhi(A), \quad A\in \Rnm \\[1.5mm]
		\displaystyle \lim_{\tA\to A} \tPhi(\tA), \quad A\in\R^{n\times m}_{+}\setminus \Rnm \\[3mm]
		\displaystyle +\infty, \quad A\notin\R^{n\times m}_{+}
	\end{array}\right. .
\end{align*}
Then, the convex function $\hPhi$ is not smooth only on 
\[
	\del\R_+^{n\times m} := \Set{A\in\R^{n\times m} | \text{$A_{ij}=0$ for some $(i,j)$} }. 
\]
In contrast with the one-to-one correspondence 
\[
	\eta \longleftrightarrow \theta=\pd{\hPhi}{\eta}(\eta)
\]
on $\R_{++}^{n\times m}$, we make use of a one-to-many correspondence
\[
	\eta \longleftrightarrow \del\hPhi(A(\eta))
\]
on $\del\R_+^{n\times m}$.   
Here, $\del\hPhi(A)$ denotes the subdifferential of $\hPhi$ at $A\in\R^{n\times m}_+$, 
which is a convex subset of $\R^{n\times m}$ defined by
\[
	\del\hPhi(A) := \Set{ S\in\R^{n\times m} | 
		\hPhi(\tA) \geq \hPhi(A) + \<S,\tA -A\>, \any\tA\in\R_{++}^{n\times m} }. 
\]
By using this type of one-to-many correspondence, 
we construct a pseudo-surjective mapping from the primal domain to the dual domain.
The subdifferential on $\del\R_+^{n\times m}$ is given in detail by the next lemma. 

\begin{lemma}	\label{thm:subdifferential}
	Suppose that the extension of $\hPhi$ is finite and of $C^1$ on $\R_+^{n\times m}$. 
	Then, for $A\in\del\R_+^{n\times m}$, letting
	\[
		\Lambda_A := \Set{ (i,j) | 1\leq i\leq n, 1\leq j\leq m \st A_{ij}\neq0 },   
	\]
	the subdifferential of $\hPhi$ at $A$ is given by
	\begin{align*}
		\del\hPhi(A) = \left\{ S=(S^{ij}) \middle| 
		\begin{array}{l}
			S^{ij} = \pd{\hPhi}{A_{ij}}(A), \quad (i,j)\in\Lambda_A, \\
			S^{ij}\leq \pd{\hPhi}{A_{ij}}(A), \quad (i,j)\notin\Lambda_A 
		\end{array} \right\},  
	\end{align*}
	where 
	\[
		\pd{\hPhi}{A_{ij}}(A) = \lim_{\tA\to A, ~\tA\in\R^{n\times m}_{++}} \pd{\tPhi}{A_{ij}}(\tA)
	\]
\end{lemma}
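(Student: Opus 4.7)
The plan is to prove both inclusions separately, in each case reducing the statement to the smooth subgradient inequality for $\hPhi$ at $A$. Since $\hPhi$ is $C^1$ on $\R_+^{n\times m}$ by hypothesis, the partial derivatives $\p\hPhi/\p A_{ij}(A)$ make sense even on the boundary as continuous limits of the interior derivatives, and convexity then yields the bridge inequality
\[
	\hPhi(\tA) \geq \hPhi(A) + \sum_{i,j} \pd{\hPhi}{A_{ij}}(A)(\tA_{ij}-A_{ij}), \qquad \tA \in \R_+^{n\times m}.
\]
This inequality will serve as the workhorse in both directions, tying the true gradient of $\hPhi$ at $A$ to any candidate subgradient $S$.

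For the inclusion $\supseteq$, I would take an $S$ of the claimed form and verify the defining subgradient inequality against an arbitrary $\tA \in \Rnm$. Splitting $\<S,\tA-A\>$ over $\Lambda_A$ and its complement, the contributions on $\Lambda_A$ coincide with those of $\sum_{i,j}\p\hPhi/\p A_{ij}(A)(\tA_{ij}-A_{ij})$ by the equality branch of the hypothesis. For $(i,j)\notin\Lambda_A$ we have $A_{ij}=0$ and $\tA_{ij}>0$, so the inequality branch $S^{ij}\leq \p\hPhi/\p A_{ij}(A)$ gives $S^{ij}(\tA_{ij}-A_{ij}) \leq (\p\hPhi/\p A_{ij}(A))(\tA_{ij}-A_{ij})$. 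Summing and combining with the bridge inequality yields $\hPhi(\tA)\geq\hPhi(A)+\<S,\tA-A\>$, so $S\in\del\hPhi(A)$.

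For the inclusion $\subseteq$, let $S\in\del\hPhi(A)$. I would pick any perturbation direction $B\in\R^{n\times m}$ that is unconstrained on $\Lambda_A$ and strictly positive off $\Lambda_A$, so that $A+\epsilon B\in\Rnm$ for all sufficiently small $\epsilon>0$. The defining inequality then yields $\hPhi(A+\epsilon B)-\hPhi(A)\geq\epsilon\<S,B\>$; dividing by $\epsilon$ and passing to $\epsilon\to 0^+$ through the $C^1$ hypothesis produces
\[
	\sum_{i,j}\pd{\hPhi}{A_{ij}}(A) B_{ij} - \<S,B\> \geq 0.
\]
Letting $B_{ij}$ take both signs on $\Lambda_A$ (with a fixed positive off-$\Lambda_A$ part) forces the equality $S^{ij}=\p\hPhi/\p A_{ij}(A)$ there, since otherwise the linear functional would be unbounded below. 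Then, with the $\Lambda_A$ part fixed, letting all off-$\Lambda_A$ coordinates of $B$ except one shrink to zero recovers the one-sided bound $S^{ij}\leq\p\hPhi/\p A_{ij}(A)$ at the remaining coordinate.

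The main obstacle I anticipate is precisely this final limiting step in the $\subseteq$ direction: the off-$\Lambda_A$ coordinates of $B$ must remain strictly positive to keep $A+\epsilon B$ inside $\Rnm$, so one cannot simply substitute an indicator direction. The remedy is to exploit the continuity of the linear functional $B\mapsto \<\nabla\hPhi(A)-S,B\>$ and pass to the limit through strictly positive approximations, which converts the strict admissibility constraint into the weak inequality $S^{ij}\leq\p\hPhi/\p A_{ij}(A)$ stated in the lemma. All other steps are routine consequences of the $C^1$ hypothesis and convexity.
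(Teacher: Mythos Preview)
Your proposal is correct and follows essentially the same route as the paper: the $\supseteq$ direction is identical (compare $\langle S,\tA-A\rangle$ to $\langle\nabla\hPhi(A),\tA-A\rangle$ using the sign of $\tA_{ij}-A_{ij}$ off $\Lambda_A$, then invoke the gradient inequality), and the $\subseteq$ direction in both cases amounts to probing $A$ with admissible one-sided perturbations and reading off the first-order consequences. The only cosmetic difference is that the paper handles $\subseteq$ in two pieces---restricting to the face $\R_{++}^{\Lambda_A}$ to force equality on $\Lambda_A$, then a single-coordinate Taylor expansion for each $(k,l)\notin\Lambda_A$---whereas you package everything into one directional-derivative inequality $\langle\nabla\hPhi(A)-S,B\rangle\geq 0$ and then specialize $B$; your explicit limiting argument for the off-$\Lambda_A$ coordinates is a careful version of what the paper does implicitly via the continuity of $\hPhi$ on $\R_+^{n\times m}$.
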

\begin{proof}
	Let $S=(S^{ij})\in\R^{n\times m}$ satisfy 
	\begin{align*}
		S^{ij} = \pd{\hPhi}{A_{ij}}(A), ~ (i,j)\in\Lambda_A, \qquad
		S^{ij} \leq \pd{\hPhi}{A_{ij}}(A), ~ (i,j)\notin\Lambda_A. 
	\end{align*}
	Then, for any $\tA\in\R_{+}^{n\times m}$, it holds that
	\[
		(i,j)\in\Lambda_A \Longrightarrow \tA_{ij}-A_{ij}=\tA_{ij}\geq 0, 
	\]
	and thus, 
	\begin{align*}
		\hPhi(A) + \<S,\tA -A\>
		&= \hPhi(A) + \sum_{i,j} S^{ij}\(\tA_{ij}-A_{ij}\) \\
		&\leq \hPhi(A) + \sum_{i,j} \pd{\hPhi}{A_{ij}}(A)\(\tA_{ij}-A_{ij}\) \\
		&\leq \hPhi(\tA). 
	\end{align*}
	This implies that $S\in\del\hPhi(A)$, 
	where we used $(\p\hPhi/\p A)(A)\in\p\hPhi(A)$ to obtain the last inequality. 
	
	Conversely, let $S\in\del\hPhi(A)$. 
	Since the restriction of $\hPhi$ onto the affine subspace
	\[
		\R_{++}^{\Lambda_A} := \left\{ \tA=(\tA_{ij}) \middle| 
		\begin{array}{l}
			\tA_{ij} > 0, \quad (i,j)\in\Lambda_A, \\
			\tA_{ij} = 0, \quad (i,j)\notin\Lambda_A 
		\end{array} \right\} 
	\]
	is strictly convex and differentiable, 
	the coordinates of the subgradient of the restriction must be $(\p\hPhi/\p A_{ij})(A)$,  
	which leads
	\[
		(i,j)\in\Lambda_A \Longrightarrow S^{ij}=\pd{\hPhi}{A_{ij}}(A). 
	\]
	For $(k,l)\notin\Lambda_A$, we assume that $S^{kl}>(\del\hPhi/\del A_{kl})(A)$. 
	Letting 
	\[
		\tA = \left\{ \begin{array}{l} 
			\tA_{kl}=A_{kl}+\ep, \\ 
			\tA_{ij}=A_{ij}, \quad (i,j)\neq(k,l) 
		\end{array} \right. , 
	\]
	from the Taylor expansion
	\begin{align*}
		\tPhi(\tA) 
		&= \hPhi(A) + \sum_{i,j} \pd{\hPhi}{A_{ij}}(A)\(\tA_{ij}-A_{ij}\) + O(\|\tA-A\|^2) \\
		&= \hPhi(A) + \ep \pd{\hPhi}{A_{kl}}(A) + O(\ep^2),  
	\end{align*}
	we obtain, for sufficiently small $\ep>0$,
	\begin{align*}
		\tPhi(\tA) 
		= \hPhi(A) + \ep S^{kl}  
		< \hPhi(A) + \sum_{i,j} S^{ij}\(\tA_{ij}-A_{ij}\). 
	\end{align*}
	This contradicts to $S\in\p\hPhi(A)$, and hence, 
	\[
		(i,j)\notin\Lambda_A \Longrightarrow S^{ij}\leq\pd{\hPhi}{A_{ij}}(A). 
	\]
\end{proof}

\subsection{Quadratic regularization of optimal transport}
We introduce an algorithm computing the barycenter in a similar way to the previous section. 
However, the correspondence of $\eta$ and $\p\hPhi(A)$ is not one-to-one for $A\in\p\R^{n\times m}_+$.    
Thus, in order to construct a practical algorithm, 
we need to clarify that correspondence. 
Lemma \ref{thm:subdifferential} helps to find a concrete correspondence. 
In this subsection, we will introduce 
the optimal transport problem with quadratic regularization as an instance. 
It is also studied by Essid and Solomon \cite{EssidSolomon}; 
however, they assumed that the cost matrix $C$ is induced from a distance on a graph.  
In our framework, such an assumption is not required. 

We consider the convex function
\[
	\tPhi(A) = \<A,C\> + \frac{\lam}{2}\sum_{i,j}A_{ij}^2, \quad A\in\Rnm,   
\]
for a fixed $\lam>0$. 
Its continuous extension is given by
\begin{align*}
	\hPhi(A) = 
	\left\{\begin{array}{l}
		\displaystyle \<A,C\> + \frac{\lam}{2}\sum_{i,j}A_{ij}^2, \quad A\in\R^{n\times m}_+, \\[3mm]
		+\infty, \quad \ow
	\end{array}\right. . 
\end{align*}
Due to Lemma \ref{thm:subdifferential}, for $A\in\del\R_+^{n\times m}$, we obtain 
\begin{align*}
	\del\hPhi(A) = \left\{ S=(S^{ij}) \middle| 
	\begin{array}{l}
		S^{ij} = C^{ij}+\lam A_{ij}, \quad (i,j)\in\Lambda_A, \\
		S^{ij}\leq C^{ij}, \quad (i,j)\notin\Lambda_A 
	\end{array} \right\} , 
\end{align*}
and hence, given $S=(S^{ij})\in\R^{n\times m}$, 
the corresponding $A(S)\in\R_+^{n\times m}$ is presented explicitly by 
\begin{align*}
	A(S)_{ij} = \1{\lam}\( S^{ij} - C^{ij} \)^+, 
\end{align*}
where $x^+:=\max\{x,0\}$. 
Due to Lemma \ref{thm:dual}, there exists a subgradient $(\alpha,\beta)\in\R^n\times\R^m$ such that 
the optimal plan $P^*(p,q)$ is given as
\[
	P^*(p,q)_{ij} = \1{\lam}\( \alpha^i + \beta^j - C^{ij} \)^+. 
\]

For the minimization problem \eqref{eq:gen-prob}, 
the procedure (S-I) to obtain the $\nabla^*$-projection onto $M_{p,\cdot}$ is 
written as
\begin{equation}	\label{eq:S-I}
	\sum_j \1{\lam}\(\alpha^i+\beta^j-C^{ij}\)^+ = p_i.
\end{equation}
Solving this equation is implemented by Algorithm \ref{alg:S-I}. 
In the algorithm, the function $\textit{sort}$ returns 
the vector obtained by rearranging the entries of $\gamma$ 
so that $\gamma^{\downarrow}_1\geq\gamma^{\downarrow}_2\geq\cdots\geq\gamma^{\downarrow}_m$. 
This rearrangement is implemented, for example, by the merge sort, 
which costs $O(m\log m)$ time. 
The procedure (S-II) can be similarly implemented,   
and iterating these two procedures realizes a generalization of the Sinkhorn algorithm. 
The time required for (S-I) and (S-II) is 
$O(\max\{n,m\}^3)$ in the worst case because of the \textbf{for} loop in Algorithm \ref{alg:S-I};
however, from the observation in numerical simulations, 
it seems that this loop can be exited in a constant time. 
Thus, our algorithm, in practice, costs $O(\max\{n,m\}^2)$ time for each iteration, 
similarly to Cuturi's algorithm. 

\begin{algorithm}[htbp]
	\caption{Solver of Eq. \eqref{eq:S-I} for each $i$}                          
	\begin{algorithmic}	\label{alg:S-I}
		\REQUIRE $\lam>0, 1\leq i\leq n, \beta=(\beta^j), p=(p_i), C=(C^{ij})$ 
		\ENSURE $\sum_j \1{\lam}\(\alpha^i+\beta^j-C^{ij}\)^+ = p_i$
		\STATE $\gamma \Leftarrow (\beta^j-C^{ij})_j$
		\STATE $\gamma^{\downarrow} \Leftarrow \mathrm{sort}(\gamma)$ 
		\FOR{$J=1$ to $m$}
			\STATE $\alpha^i  \Leftarrow \(\lam p_i - \sum_{j=1}^J \gamma^{\downarrow}_j\)/ J$
			\IF{$\sum_j \1{\lam}\(\alpha^i+\beta^j-C^{ij}\)^+ = p_i$}
				\STATE \textbf{break}
			\ENDIF
		\ENDFOR
	\end{algorithmic}
\end{algorithm}

For the barycenter problem, 
the procedure (B-I) is presented in a similar form to \eqref{eq:S-I},  
which is solved by Algorithm \ref{alg:S-I} for each $k$ and $i$. 
On the other hand, the procedure (B-II) is hard to solve, 
and we make use of the alternative procedure (B-II)$_k$, 
which is written as the equation 
\begin{align}		\label{eq:B-IIk}
\begin{aligned}
	\sum_i \1{\lam}\( (\alpha^k)^i+(\beta^k)^j - C^{ij} \)^+ 
	&= \sum_i \1{\lam}\( (\alpha^{k+1})^i+(\beta^{k+1})^j - C^{ij} \)^+, \\
	\quad r_k (\beta^k)^j + r_{k+1} (\beta^{k+1})^j &= -\sum_{l\neq k,k+1} r_l (\beta^l)^j =: \Sigma^{k;j}, 
\end{aligned}
\end{align}
of $\beta^k,\beta^{k+1}\in\R^n$, 
where $(\alpha^1,\dots,\alpha^N)$ and $(\beta^1,\dots,\beta^{k-1},\beta^{k+2},\dots,\beta^N)$ are fixed. 
We can solve this equation by Algorithm \ref{alg:B-IIk}. 

In summary, 
the barycenter for the quadratic regularized Wasserstein distance is obtained by
applying Algorithm \ref{alg:S-I} to $p^k$ for $1\leq k\leq N$ 
and performing Algorithm \ref{alg:B-IIk} for $1\leq k\leq N-1$, iteratively. 
Since Algorithm \ref{alg:B-IIk}, in the worst case, costs $O(n^2)$
time for each $j$ and $k$, 
each iteration of the main loop may require the cost $O(Nn^3)$.  
However, also in this case, 
the \textbf{while} loop in Algorithm \ref{alg:B-IIk} can be exited with less cost. 
At the best performance, each iteration of our algorithm requires $O(Nn^2)$, 
which is the same order as Benamou \etal['s] algorithm does. 

\begin{algorithm}[H]
	\caption{Solver of Eq. \eqref{eq:B-IIk} for each $j$ and $k$}                          
	\begin{algorithmic}	\label{alg:B-IIk}
		\REQUIRE $\lam>0, 1\leq j\leq n, 1\leq k\leq N-1, C=(C^{ij}), (\alpha^k,\alpha^{k+1}), \Sigma^{k;j}$ 
		\ENSURE $(\beta^k)^j, (\beta^{k+1})^j$ solves Eq. \eqref{eq:B-IIk}
		\STATE $\gamma^1 \Leftarrow ((\alpha^k)^i-C^{ij})_i; 
			~\gamma^2 \Leftarrow ((\alpha^{k+1})^i-C^{ij})_i$
		\STATE $(\gamma^1)^{\downarrow} \Leftarrow \mathrm{sort}(\gamma^1); 
			~(\gamma^2)^{\downarrow} \Leftarrow \mathrm{sort}(\gamma^2)$
		\STATE $I_1 \Leftarrow 1; ~I_2 \Leftarrow 1$
		\WHILE{$I_1< n$ and $I_2< n$}
			\STATE $(\beta^k)^j \Leftarrow \1{r_{k+1} I_1+r_k I_2}\(I_2\Sigma^{k;j} 
				- r_{k+1}\(\sum_{i=1}^{I_1} (\gamma^1)^{\downarrow}_i 
				- \sum_{l=1}^{I_2} (\gamma^2)^{\downarrow}_l\)\)$
			\STATE $(\beta^{k+1})^j \Leftarrow \1{r_{k+1}}\(\Sigma^{k;j} - r_k(\beta^k)^j\)$
			\IF{$\sum_i \((\alpha^k)^i+(\beta^k)^j-C^{ij}\)^+
				= \sum_i \((\alpha^{k+1})^i+(\beta^{k+1})^j-C^{ij}\)^+$}
				\STATE \textbf{break}
			\ELSIF{{\small $\(\sum_i \((\alpha^k)^i+(\beta^k)^j-C^{ij}\)^+
				-\sum_i \((\alpha^{k+1})^i+(\beta^{k+1})^j-C^{ij}\)^+\)
				\cdot(\gamma^1)^{\downarrow}_{I_1+1} \leq 0$}}
				\STATE $I_1 \Leftarrow I_1+1$
			\ELSE
				\STATE $I_2 \Leftarrow I_2+1$
			\ENDIF
		\ENDWHILE
	\end{algorithmic}
\end{algorithm}

\subsection{Numerical Simulations}
We performed numerical simulations of solving the barycenter problems
with Cuturi's entropic regularization \eqref{eq:bary} 
and with quadratic regularization. 
We prepared two $32\times 32$ pixel grayscale images as extreme points $p^1,p^2\in\calP_{n-1}$, 
where $n=32\times 32=1024$. 
They are presented in Fig. \ref{fig1}. 

\begin{figure}[htbp]
\begin{center}
	\includegraphics[width=0.2\textwidth]{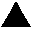}
	\hspace{2mm}
	\includegraphics[width=0.2\textwidth]{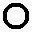}
	\caption{
		Two $32\times 32$ pixel grayscale images prepared as $p^1,p^2\in\calP_{n-1}$. 
		We regard each pixel as event $i\in\{1,2,\dots, n\}$
		and brightness of each pixel as a density on $i$, 
		that is, a white pixel has its density zero. 
	}
	\label{fig1} 
\end{center}
\end{figure}

\begin{figure}[htbp]
\begin{center}
    	\includegraphics[width=0.86\textwidth]{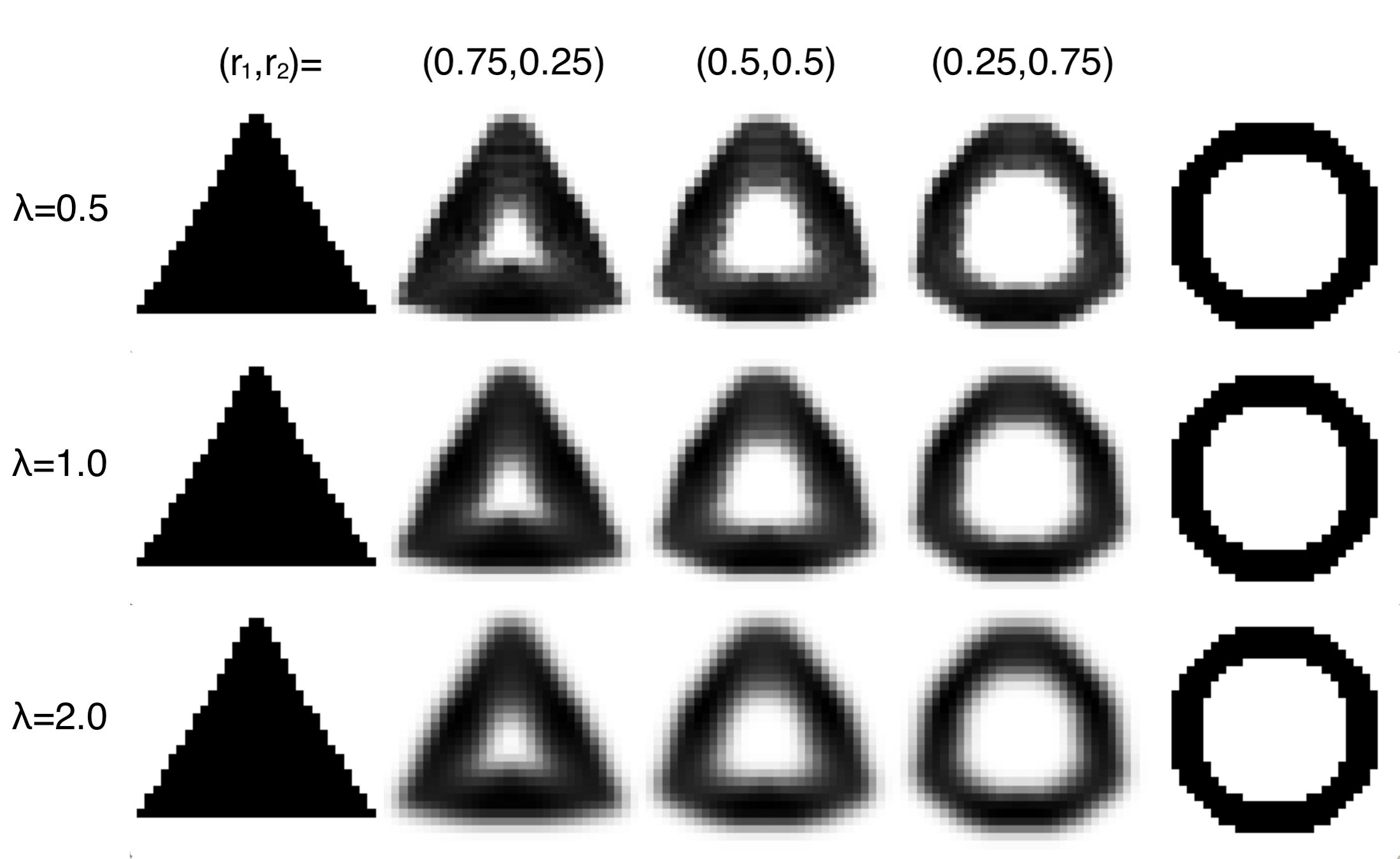}
	\caption{
		Wasserstein barycenters regularized by Shannon entropy 
		computed by Benamou \textit{et al.}'s algorithm. 
		The number of iteration is 3,000,  
		and the regularization constant $\lambda$ is 0.5 (top), 1.0 (middle), and 2.0 (bottom). 
		The ratio $(r_1,r_2)$ is set to be (0.75,0.25), (0.5,0.5), (0.25,0.75), for each simulation. 
	}
	\label{fig2} 
\end{center}
\end{figure}

\begin{figure}[htbp]
\begin{center}
    	\includegraphics[width=0.86\textwidth]{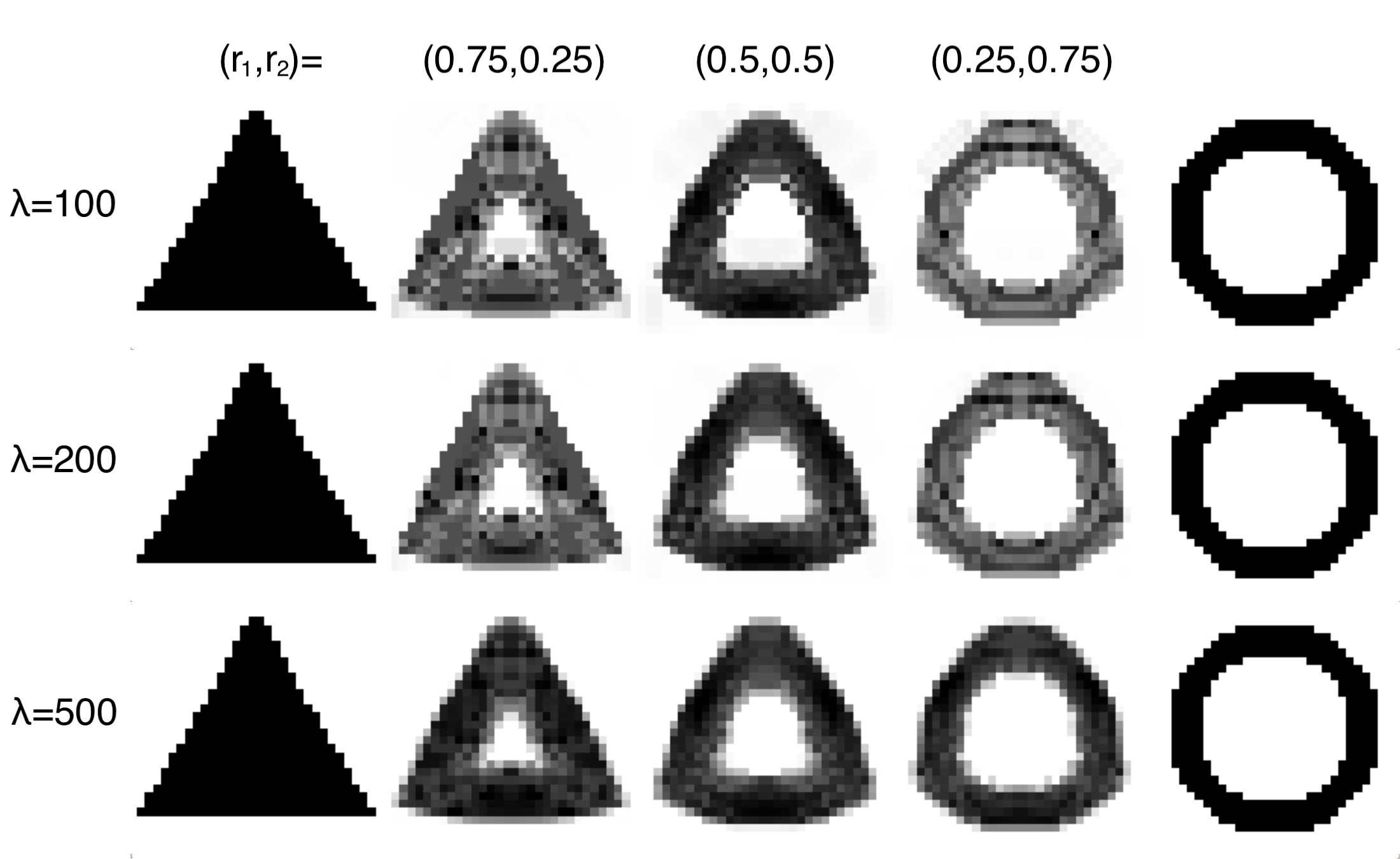}
	\caption{
		Wasserstein barycenters regularized by quadratic term
		computed by our algorithm. 
		The number of iteration is 3,000,  
		and the regularization constant $\lambda$ is 100 (top), 200 (middle), and 500 (bottom). 
		The ratio $(r_1,r_2)$ is set to be (0.75,0.25), (0.5,0.5), (0.25,0.75), for each simulation. 
	}
	\label{fig3} 
\end{center}
\end{figure}

We set the number of iteration to be 3,000, 
and the ratio $(r_1,r_2)$ of the \Frechet~mean to be (0.75,0.25), (0.5,0.5), and (0.25,0.75). 
Fig. \ref{fig2} presents the results of numerical simulations 
for Cuturi's entropic regularization solved by Benamou \etal['s] algorithm (Algorithm \ref{alg2}), 
and the regularization constant $\lambda$ is set to be 0.5, 1.0, and 2.0. 
Fig. \ref{fig3} does that of quadratic regularization solved by our algorithm
(Algorithm \ref{alg:S-I} and \ref{alg:B-IIk}), 
where $\lambda$ is set to be 100, 200, and 500. 

One can see that the barycenters with Cuturi's regularization are blurred 
compared to the ones with quadratic regularization. 
For small $\lambda$, such a blurring effect is suppressed; 
however, Benamou \etal['s] algorithm cannot perform well
for smaller $\lambda$ because of underflow.   
In fact, the values $K_{ij}=\exp(-C^{ij}/\lam)$ become too small for practical computation when $\lam$ is small. 
On the contrary, our algorithm uses only summation, and does not cause the underflow. 
In addition, one can check that the quadratic regularization yields fine solutions 
even when $\lambda$ is large.


	\section{Concluding remarks}

In the present paper, 
we generalized the entropic regularized optimal transport problem 
by Cuturi \cite{Cuturi13},  
and studied the minimization problem of a strictly convex smooth function $\Phi$ 
on the set $\Pnm$ of joint distributions under the constraint that marginal distributions are fixed. 
We clarified that the solution of the problem is represented in a specific form 
using a pair of dual affine coordinate systems (Theorem \ref{thm:onto}), 
and proposed an iterative method for obtaining the solution. 
We also studied the barycenter problem \cite{CuturiDoucet} 
from an information geometric point of view. 
We further showed numerically that 
our method works particularly efficient for $\Phi(P)=\<C,P\>+\frac{\lam}{2}\|P\|^2$.  

The framework treated in this paper is a maximal extension of Amari-Cuturi's one. 
Our framework subsumes some important problems, such as
the Tsallis entropic regularized transport problem \cite{MuzellecNPN}, 
which is represented in our framework with $\Phi(P)=\<C,P\>-\lam\calT_{\tq}(P)$, 
and the quadratic regularized one on a graph \cite{EssidSolomon}, 
which corresponds to $\Phi(P)=\<C,P\>+\frac{\lam}{2}\|P\|^2$ 
with $C$ being a metric matrix induced from a graph. 
Given the generality of our framework, 
it is expected that there exist many other applications. 
Finding another practical problem to which our framework can be applied is a future task. 

It was proved that, with our method, 
the Bregman divergence associated to the convex function $\Phi$
as measured from the optimal solution monotonically decreases;  
however, its convergence property is still unclear. 
For the Sinkhorn algorithm, a special case where $\Phi(P)=\<C,P\>-\lam\calH(P)$, 
Franklin and Lorenz \cite{FranklinLorenz} studied the convergence rate, 
and showed the exponentially fast convergence of the sequence generated from the algorithm 
with respect to the Hilbert metric. 
However, their analysis is specialized to the Sinkhorn algorithm, 
and it is difficult to extend their result to a generic case, 
since the Hilbert metric has, to the best of our knowledge, 
no relation with the information geometry. 
For a generic $\Phi$, evaluating the convergence rate of our method is an open problem.

\section*{Acknowledgements}
	The author would like to thank Professor Akio Fujiwara for his helpful guidance, discussions, and advice.



\section*{Appendix: Dual problem and proof of Theorem \ref{thm:happy}}	\label{app1}
\renewcommand{\thetheorem}{A.\arabic{theorem}}

In this section, we give a proof of Lemma \ref{thm:dual} and Theorem \ref{thm:happy}, 
and prove the existence of a dual solution (Lemma \ref{thm:dual-sol-exist}). 
Lemma \ref{thm:dual} follows from the Fenchel-Rockafellar duality, 
which is a prominent result in convex analysis. 
In order to prove Theorem \ref{thm:happy} analogously to Theorem \ref{thm:onto}, 
we prepare Lemma \ref{thm:1-homDual}, which provides
the dual problem for a 1-homogeneous $\Phi$. 

\begin{proposition}[Fenchel-Rockafellar duality]	\label{thm:FR-dual}
	Let $\Theta,\Xi:\R^n\to\R\cup\{+\infty\}$ be convex functions
	which are proper, i.e., the sets $\{\Theta(u)<+\infty\}$ and $\{\Xi(u)<+\infty\}$ are not empty. 
	Let $\Theta^*,\Xi^*$ be their Legendre transformations, respectively. 
	Then, the equality
	\[
		\inf_{u\in\R^n}\Set{\Theta(u)+\Xi(u)} = \sup_{A\in\R^n}\Set{-\Theta^*(-A)-\Xi^*(A)}
	\]
	holds. 
\end{proposition}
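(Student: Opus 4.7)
The plan is to split the argument into the easy direction (weak duality) and the harder reverse inequality (strong duality), the latter being established by a geometric Hahn--Banach separation argument in $\R^{n+1}$.

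For weak duality, I would invoke the Fenchel--Young inequality for each function separately. By definition of the Legendre transform, for every $u,A\in\R^n$,
\begin{align*}
	\Theta(u) + \Theta^*(-A) &\geq \<-A, u\>, \\
	\Xi(u) + \Xi^*(A) &\geq \<A, u\>.
\end{align*}
Adding these cancels the inner products and yields $\Theta(u)+\Xi(u) \geq -\Theta^*(-A) - \Xi^*(A)$, so the primal infimum dominates the dual supremum. This direction requires no hypothesis beyond properness.

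For the nontrivial reverse inequality, I would set $p := \inf_u \{\Theta(u)+\Xi(u)\}$ and assume it is finite (otherwise the statement is either trivial or requires a separate degenerate argument). Define the two convex subsets of $\R^n\times\R$
\begin{align*}
	C_1 &:= \Set{ (u,t) | t > \Theta(u) }, \\
	C_2 &:= \Set{ (u,t) | t \leq p - \Xi(u) }.
\end{align*}
By construction these are disjoint: if $(u,t)$ lay in both, then $\Theta(u) < t \leq p - \Xi(u)$, forcing $\Theta(u)+\Xi(u) < p$, which contradicts the definition of $p$. Since $C_1$ is open and convex and $C_2$ is convex, the geometric Hahn--Banach theorem provides a nonzero $(A_0, \mu)\in\R^n\times\R$ and a constant $c$ separating $C_1$ and $C_2$. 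A standard verification shows that $\mu$ cannot be zero (the separating hyperplane cannot be vertical under the assumption that there exists a point where both $\Theta$ and $\Xi$ are finite and one is continuous), and by rescaling one may take $\mu=1$. The separation then rearranges into $\Theta(u) \geq \<-A, u\> + (c - p)$ and $\Xi(u) \geq \<A, u\> - c$ for all $u$, with $A := A_0$. Taking suprema over $u$ identifies $c$ and $p-c$ with the Legendre values $\Xi^*(A)$ and $\Theta^*(-A)$ respectively, yielding $p \geq -\Theta^*(-A) - \Xi^*(A)$, which closes the duality gap.

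The main obstacle I expect is the non-verticality of the separating hyperplane, i.e.\ ruling out $\mu=0$. This is exactly the place where a constraint qualification (such as $\mathrm{ri}(\mathrm{dom}\,\Theta)\cap\mathrm{ri}(\mathrm{dom}\,\Xi)\neq\emptyset$, or continuity of one function at a point where the other is finite) must be invoked; without it the sets $C_1$ and $C_2$ can be separated only by a vertical hyperplane, and strong duality may genuinely fail. Since the statement as written does not spell out such a hypothesis, in a complete write-up I would either add it explicitly or appeal directly to a standard reference such as Rockafellar's \emph{Convex Analysis}. The remaining degenerate cases (e.g.\ $p=-\infty$, or one of the Legendre transforms identically $+\infty$) can then be handled by short separate arguments from the definitions.
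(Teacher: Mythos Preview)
The paper does not actually prove this proposition: it is stated in the Appendix as a ``prominent result in convex analysis'' and then immediately applied to derive Lemma~3.1, with no argument given. So there is no paper's proof to compare against; the authors are simply quoting a standard theorem.

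Your sketch is the textbook route (Fenchel--Young for weak duality, then Hahn--Banach separation of the strict epigraph of $\Theta$ from the hypograph of $p-\Xi$ for strong duality), and it is correct in outline. You have also correctly put your finger on the only real issue: as stated, the proposition omits any constraint qualification, yet ruling out a vertical separating hyperplane genuinely requires one (e.g.\ $\mathrm{ri}(\mathrm{dom}\,\Theta)\cap\mathrm{ri}(\mathrm{dom}\,\Xi)\neq\emptyset$, or continuity of one function at a point in the domain of the other). Without such a hypothesis strong duality can fail, so strictly speaking the proposition as written is not true in full generality; your instinct to either add the qualification or cite Rockafellar directly is the right call. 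In the paper's intended application (proof of Lemma~3.1) the qualification is satisfied, since $\Theta(u)=\tPhi^*(-u)$ is finite and continuous on a set meeting the affine domain of $\Xi$, so the gap is harmless there---but it is a gap in the stated proposition nonetheless.
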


\begin{proof}[Proof of Lemma \ref{thm:dual}]
	Applying Proposition \ref{thm:FR-dual} to the convex functions
	\begin{align*}
		\Theta(u) = \tPhi^*(-u), \qquad
		\Xi(u) = \left\{ \begin{array}{l}
			-\<p,\alpha\>-\<q,\beta\> \quad \text{if $u=\alpha\oplus\beta$} \\
			+\infty \quad \ow
		\end{array}\right. ,
	\end{align*}
	since one can check that
	\begin{align*}
		\Theta^*(-A) &= \left\{ \begin{array}{l}
			\hPhi(A) \quad \text{if $A\in\R^{n\times m}_{+}$} \\
			+\infty \quad \ow
		\end{array} \right. , \\[2mm]
		\Xi^*(A) &= \left\{ \begin{array}{l}
			0 \quad \text{if $\sumn[m]{j}A_{ij}=p_i ~\sumn[n]{i}A_{ij}=q_j$} \\
			+\infty \quad \ow
		\end{array}\right. ,
	\end{align*}
	the conclusion follows,
	where $\hPhi$ denotes the continuous extension of $\tPhi$ onto $\R^{n\times m}_+$.  
\end{proof}

\begin{lemma}	\label{thm:dual-sol-exist}
	Let $\tPhi : \R^{n\times m}_{++} \to \R$ be a convex function 
	and $\tPhi^* : \R^{n\times m}\to \R\cup\{+\infty\}$ be its Legendre transform. 
	Then, there exists a solution $(\alpha^*,\beta^*)$ 
	of the optimization problem
	\begin{equation}	\label{eq:dual-sol-exist}
		\sup_{\alpha\in\R^n, \beta\in\R^m} \{\<p,\alpha\>+\<q,\beta\> -\tPhi^*(\alpha\oplus\beta)\}  
	\end{equation}
	for any $p\in\calP[n-1]$, $q\in\calP[m-1]$. 
\end{lemma}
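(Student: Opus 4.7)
The plan is to recognise the right-hand side of \eqref{eq:dual-sol-exist} as the Lagrangian dual of the primal program $\inf_{P \in \Pi(p,q)} \tPhi(P)$, which is a convex minimisation under linear equality constraints (the row and column marginals), and then to invoke a classical attainment result from convex analysis.

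First I would extend $\tPhi$ to a proper lower semicontinuous convex function $\hPhi : \R^{n\times m} \to \R \cup \{+\infty\}$ by setting $\hPhi \equiv \tPhi$ on $\Rnm$, $\hPhi(A) := \lim_{\tA \to A} \tPhi(\tA)$ for $A \in \partial\R^{n\times m}_+$ (allowing $+\infty$), and $\hPhi \equiv +\infty$ outside $\R^{n\times m}_+$. The Legendre transform of $\hPhi$ then coincides with $\tPhi^*$, and the Lagrangian dual of the linearly-constrained primal, with multipliers $\alpha \in \R^n$ and $\beta \in \R^m$ for the row and column marginals respectively, is precisely $F(\alpha, \beta) := \<p, \alpha\> + \<q, \beta\> - \tPhi^*(\alpha \oplus \beta)$. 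Finiteness of $\sup F$ is immediate from the Legendre inequality $\tPhi^*(\alpha\oplus\beta) \geq \<P, \alpha\oplus\beta\> - \tPhi(P)$ applied to any $P \in \Pi(p,q)$, which gives $F(\alpha, \beta) \leq \tPhi(P)$. Next I would verify a relative-interior (Slater-type) qualification: the product $P_0 := (p_i q_j)_{ij}$ lies in $\Rnm \cap \Pi(p,q)$, which is exactly the intersection of the relative interior of the effective domain of $\hPhi$ with the affine constraint set, since $p_i, q_j > 0$. Under this qualification, the classical Fenchel--Rockafellar attainment theorem for convex programs whose only constraints are linear equalities (e.g.\ Rockafellar, \emph{Convex Analysis}, Theorem~28.2) implies that the dual supremum is attained, yielding the desired $(\alpha^*, \beta^*)$.

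The main obstacle would be making the argument fully self-contained, i.e.\ avoiding the external attainment theorem and instead establishing coercivity of $F$ by direct means. For this I would first exploit the one-parameter gauge symmetry $F(\alpha + c\onevec{n}, \beta - c\onevec{m}) = F(\alpha, \beta)$, valid since $\sum_i p_i = \sum_j q_j = 1$, to normalise $\alpha^n = 0$ and reduce to a quotient of dimension $n+m-1$. On this quotient I would argue, along any escape direction $(\dot\alpha, \dot\beta)$, that a suitable perturbation $P_0 + \varepsilon D \in \Rnm$ with controlled marginal mismatches can be inserted into the Legendre inequality to show that $\tPhi^*(t\dot\alpha \oplus t\dot\beta)$ eventually dominates the linear part $t(\<p,\dot\alpha\> + \<q,\dot\beta\>)$ as $t \to \infty$. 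Combining this coercivity with the upper semicontinuity of $F$ (inherited from the lower semicontinuity of $\tPhi^*$) would produce a maximiser via the direct method applied to any maximising sequence.
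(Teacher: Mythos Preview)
Your proposal is correct, and it takes a genuinely different route from the paper's own argument. The paper proceeds by defining $\Psi(\alpha,\beta):=\tPhi^*(\alpha\oplus\beta)$, observing that the dual problem is literally the computation of the conjugate $\Psi^*(p,q)$, and then asserting that a maximiser exists as an element of $\partial\Psi^*(p,q)$; finiteness of $\Psi^*(p,q)$ is obtained from Lemma~\ref{thm:dual} by identifying it with $\inf_{P\in\Pi(p,q)}\tPhi(P)$. This is shorter and conceptually elegant, but as written it leaves implicit the verification that $(p,q)$ lies in the relative interior of $\mathrm{dom}\,\Psi^*$ (finiteness alone does not guarantee a nonempty subdifferential at boundary points).

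Your approach, by contrast, frames the problem as a Lagrangian dual and invokes the Fenchel--Rockafellar attainment theorem after an explicit Slater-type check via the product coupling $P_0=(p_iq_j)$. This is more in the standard convex-programming idiom and makes the constraint qualification visible, which is precisely the step the paper glosses over. Your alternative coercivity sketch (quotienting by the gauge $\alpha\mapsto\alpha+c\,1_n,\ \beta\mapsto\beta-c\,1_m$ and controlling escape directions via perturbations of $P_0$) is also sound in outline, and would yield a fully self-contained proof; it is more laborious but avoids citing an external attainment theorem. Either version of your argument is at least as rigorous as the paper's, at the cost of being longer.
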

\begin{proof}
	Let $\Psi$ be a function on $\R^n\times\R^m$ defined by
	\[
		\Psi(\alpha,\beta) = \tPhi^*(\alpha\oplus\beta).  
	\]
	Then, $\Psi$ becomes a convex function on the standard affine structure on $\R^n\times\R^m$. 
	The problem \eqref{eq:dual-sol-exist} is no other than the Legendre transformation 
	\[
		\Psi^*(p,q) 
		= \sup_{(\alpha,\beta)\in\R^n\times\R^m} \{\<(p,q),(\alpha,\beta)\> -\Psi(\alpha,\beta)\}, 
	\]
	and its supremum $(\alpha^*,\beta^*)$ is given by a subgradient of $\Psi^*$ at $(p,q)$. 
	Hence, unless $\Psi^*(p,q)=+\infty$, we can conclude that $(\alpha^*,\beta^*)$ exists. 
	Due to Lemma \ref{thm:dual}, we can see that
	\[
		\Psi^*(p,q) = \inf_{P\in\Pi(p,q)} \tPhi(P), 
	\]
	and it cannot be infinite, since $\tPhi$ is finite-valued on $\Rnm$. 
\end{proof}

Before we prove Theorem \ref{thm:happy}, we observe the dual problem for a 1-homogeneous function $\tPhi$. 
The next lemma is a direct consequence of Lemma \ref{thm:dual}, 
and we thus omit a proof. 

\begin{lemma}	\label{thm:1-homDual}
	Let $\Phi : \R^{n\times m}_{++} \to \R$ be a 1-homogeneous convex function. 
	For $p\in\calP[n-1]$, $q\in\calP[m-1]$, 
	\[
		\inf_{P\in\Pi(p,q)} \Phi(P) = \sup \Set{\<p,\alpha\>+\<q,\beta\> | 
			\begin{array}{l}
				\alpha\in\R^n, \beta\in\R^m, \\
				\Phi^*(\alpha\oplus\beta) = 0 
			\end{array} }. 
	\]
\end{lemma}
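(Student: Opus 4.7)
The plan is to derive this lemma from Lemma \ref{thm:dual} by observing that, for a $1$-homogeneous convex $\Phi$ on $\Rnm$, the Legendre transform $\Phi^*$ takes only the values $0$ and $+\infty$, so that the subtractive penalty $-\Phi^*(\alpha\oplus\beta)$ appearing in the dual objective of Lemma \ref{thm:dual} collapses into a hard feasibility constraint $\Phi^*(\alpha\oplus\beta)=0$.

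First I would establish this dichotomy for $\Phi^*$. Fix $u\in\R^{n\times m}$, and note that for any $A\in\Rnm$ and any $t>0$ the scaled matrix $tA$ also lies in $\Rnm$, while the $1$-homogeneity of $\Phi$ yields
\[
    \<tA,u\>-\Phi(tA) \;=\; t\,\bigl(\<A,u\>-\Phi(A)\bigr).
\]
If there exists even a single $A\in\Rnm$ with $\<A,u\>-\Phi(A)>0$, then letting $t\to+\infty$ forces $\Phi^*(u)=+\infty$. Otherwise $\<A,u\>\le\Phi(A)$ for every $A\in\Rnm$, so $\Phi^*(u)\le 0$; and letting $t\to 0^+$ along any fixed $A$ shows $\Phi^*(u)\ge 0$, hence $\Phi^*(u)=0$.

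Next I would feed this dichotomy into the identity provided by Lemma \ref{thm:dual}. On pairs $(\alpha,\beta)$ with $\Phi^*(\alpha\oplus\beta)=+\infty$ the bracketed quantity $\<p,\alpha\>+\<q,\beta\>-\Phi^*(\alpha\oplus\beta)$ equals $-\infty$ and contributes nothing to the supremum, whereas on pairs with $\Phi^*(\alpha\oplus\beta)=0$ it reduces exactly to $\<p,\alpha\>+\<q,\beta\>$. Restricting the supremum to the feasible set $\{\Phi^*(\alpha\oplus\beta)=0\}$ therefore yields the right-hand side of the stated identity.

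The main obstacle, and really the only delicate point, is the scaling argument in the dichotomy: one must confirm that $tA$ stays inside the open cone $\Rnm$ where $\Phi$ is defined (immediate for $t>0$) and that the supremum of $t(\<A,u\>-\Phi(A))$ over $t>0$ is genuinely attained, in the limit $t\to 0^+$, at the value $0$ rather than strictly exceeding it. Granting this, the passage from Lemma \ref{thm:dual} to Lemma \ref{thm:1-homDual} is pure bookkeeping, which explains why the author omits the proof.
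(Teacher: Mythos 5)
Your proposal is correct and follows exactly the route the paper intends: the paper omits the proof, stating only that the lemma is a direct consequence of Lemma \ref{thm:dual}, and your argument supplies precisely the missing detail, namely that the conjugate of a $1$-homogeneous convex function on the cone $\Rnm$ takes only the values $0$ and $+\infty$, so the penalty term in the dual of Lemma \ref{thm:dual} collapses to the constraint $\Phi^*(\alpha\oplus\beta)=0$. The point you flag as delicate is in fact harmless, since one only needs the supremum of $\{t(\<A,u\>-\Phi(A)) : t>0\}$ to equal $0$ when $\<A,u\>-\Phi(A)\leq 0$, not that it be attained.
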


The next result is also important when treating a 1-homogeneous convex function. 
\begin{lemma}	\label{thm:score}
	Let $\tPhi:\Rnm\to\R$ be a 1-homogeneous convex function, 
	and $S:\Rnm\to\R^{n\times m}$ be its derivative. 	
	Then, 
	\[
		\tPhi(A) = \<A, S(A)\> \geq \<A,S(\tA)\>,  
	\]
	for any $A,\tA\in\Rnm$.  
\end{lemma}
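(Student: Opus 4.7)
The plan is to deduce both the equality and the inequality from two standard facts: Euler's identity for $1$-homogeneous functions (which gives the equality) and the gradient form of the subgradient inequality for convex functions (which, combined with Euler's identity, gives the inequality).

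First I would establish the equality $\tPhi(A) = \langle A, S(A)\rangle$. Since $\Rnm$ is an open cone and $\tPhi$ is $1$-homogeneous, one has $\tPhi(tA)=t\tPhi(A)$ for all $t>0$ and $A\in\Rnm$. Differentiating both sides in $t$, the chain rule yields $\sum_{i,j} A_{ij} S^{ij}(tA) = \tPhi(A)$. Evaluating at $t=1$ gives exactly $\langle A, S(A)\rangle = \tPhi(A)$, which is Euler's identity.

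Next I would obtain the inequality. Since $\tPhi$ is convex and differentiable on $\Rnm$, the standard gradient inequality for smooth convex functions gives, for all $A,\tA\in\Rnm$,
\[
\tPhi(A) \geq \tPhi(\tA) + \langle S(\tA), A-\tA\rangle.
\]
Applying Euler's identity at $\tA$, namely $\tPhi(\tA)=\langle \tA, S(\tA)\rangle$, the right-hand side simplifies to $\langle A, S(\tA)\rangle$. Combining with the equality proved in the first step,
\[
\langle A, S(A)\rangle = \tPhi(A) \geq \langle A, S(\tA)\rangle,
\]
which is exactly the asserted chain.

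There is no real obstacle here: the only thing to be a bit careful about is that $\tPhi$ is assumed defined and $1$-homogeneous on the open cone $\Rnm$, so the scaling argument for Euler's identity stays inside the domain, and the differentiability assumed in the statement (so that $S$ is well defined) justifies both the chain-rule calculation and the smooth-case subgradient inequality. No subdifferential calculus in the sense of Lemma~\ref{thm:subdifferential} is needed because the inequality is only asserted on the open cone $\Rnm$ where $\tPhi$ is smooth.
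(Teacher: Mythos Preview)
Your proof is correct and follows the same overall structure as the paper: first establish Euler's identity $\tPhi(A)=\langle A,S(A)\rangle$, then combine the gradient inequality for convex functions at $\tA$ with Euler's identity at $\tA$ to get $\tPhi(A)\geq\langle A,S(\tA)\rangle$. The second step is identical to the paper's.

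The only difference is in how Euler's identity is obtained. You differentiate the homogeneity relation $\tPhi(tA)=t\tPhi(A)$ in $t$ and evaluate at $t=1$, which is the textbook calculus proof. The paper instead applies the subgradient inequality to the pair $(\rho A,A)$, obtaining $(\rho-1)\tPhi(A)\geq(\rho-1)\langle A,S(A)\rangle$ after using $1$-homogeneity, and then varies $\rho$ on both sides of $1$ to force equality. Your argument is slightly more direct given that differentiability is already assumed; the paper's argument has the minor conceptual advantage that it uses only the subgradient property of $S(A)$ and would go through verbatim for any selection from the subdifferential, though this extra generality is not needed here.
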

\begin{proof}
	First, for any $\rho\neq 1$, since $S(A)\in\p\tPhi(A)$, we have
	\[
		H(\rho A) \geq \<\rho A - A, S(A)\> + H(A). 
	\]
	Then, since $\tPhi$ is 1-homogeneous, we can write
	\[
		(\rho-1)H(A) \geq (\rho-1)\<A, S(A)\>. 
	\]
	We can choose $\rho\neq 1$ arbitrarily, and hence, the first equality is shown. 
	
	Second, since $S(\tA)\in\p\tPhi(\tA)$, we have
	\[
		\tPhi(A) \geq \<A-\tA, S(\tA)\> + \tPhi(\tA)  
	\]
	for any $A\in\Rnm$. 
	Then, since $\tPhi(\tA)=\<\tA,S(\tA)\>$ as seen in the above, we finally obtain 
	\[
		\tPhi(A) \geq \<A,S(\tA)\>. 
	\]
\end{proof}

Now, we arrive at the proof of Theorem \ref{thm:happy}. 

\begin{proof}[Proof of Theorem \ref{thm:happy}]
	The proof proceeds in three steps. 
	First, let $P^*(p,q)\in\overline{\Pi(p,q)}$ be a solution of \eqref{eq:gen-prob}, 
	$(\alpha^*,\beta^*)$ be a dual solution, and we show that $\alpha^*\oplus\beta^* \in \partial\tPhi(P^*)$. 
	Second, we construct $P_*\in\Pnm$ such that $\nabla\tPhi(P_*)=\alpha^*\oplus\beta^*$. 
	Finally, we show that $P^*(p,q)$ and $P_*$ are equal, and thus it is located in $\Pi(p,q)$. 
	Here, we can assume that $\Phi(P^*(p,q))<\infty$ without loss of generality. 
	Thus, if necessary, let $\Phi(P^*(p,q)), S(P^*(p,q))$ denote the continuous extension of $\Phi, S$ 
	to $P^*(p,q)\in\overline{\Pi(p,q)}$, respectively.
	
	The existence of $P^*(p,q)$ follows from the compactness of $\overline{\Pi(p,q)}$. 
	Let $(\alpha^*,\beta^*)$ be a solution guaranteed in Lemma \ref{thm:dual-sol-exist}. 
	Due to the duality in Lemma \ref{thm:1-homDual}, 
	\begin{equation}	\label{eq:sub1}
		\Phi(P^*(p,q)) = \<p,\alpha^*\> + \<q,\beta^*\> 
		= \<P^*(p,q),\alpha^*\oplus\beta^*\> - \tPhi^*(\alpha^*\oplus\beta^*),   
	\end{equation}
	which implies that $\alpha^*\oplus\beta^*\in\p\tPhi(P^*(p,q))$. 
	
	Since $S:\Pnm\to\R^{n\times m}/\<\onevec{nm}\>$ is surjective by assumption, 
	there exist $c\in\R$ and $P_*\in\Pnm$ such that
	\begin{equation}	\label{eq:second-assert}
		S(P_*) = \pd{\tPhi}{A}(P_*) = \alpha^*\oplus\beta^* + c \, \onevec{nm} .  
	\end{equation}
	If we assume that $c>0$, from Lemma \ref{thm:score}, for $A\in\R^{n\times m}_+$, 
	\begin{align*}
		\tPhi(A) & \geq \<A, S(P_*)\> \\ 
			& = \<A, \alpha^*\oplus\beta^*+c \onevec{nm}\>. 
	\end{align*}
	Letting $A$ tend to $P^*(p,q)$, compared with \eqref{eq:sub1}, it yields $0 \geq c$, 
	which leads to a contradiction. 
	On the other hand, if we assume that $c<0$, since $\tPhi^*(\alpha^*\oplus\beta^*)=0$, 
	we have
	\begin{align*}
		\<P_*, \alpha^*\oplus\beta^*\> 
		&\leq \<P_*, S(P_*)\> \\
		&= \<P_*, \alpha^*\oplus\beta^* + c\, \onevec{nm}\> , 
	\end{align*}
	which also leads to a contradiction. 
	Hence, we obtain $c=0$ or $S(P_*) = \alpha^*\oplus\beta^*$
	from \eqref{eq:second-assert}. 
	
	Finally, 
	we show that $P^*(p,q)=P_*\in\Pi(p,q)$ by contradiction. 
	Otherwise, since $\Phi$ is strictly convex on $\Pnm$, for $t\in(0,1)$, 
	\begin{align*}
		\Phi(t P_* + (1-t)P^*(p,q)) &< t \Phi(P_*) + (1-t)\Phi(P^*(p,q)) \\
			&= \<t P_* - (1-t)P^*(p,q), \alpha^*\oplus\beta^*\> . 
	\end{align*}
	On the other hand, since $\alpha^*\oplus\beta^*$ is a subgradient of $\tPhi$ at $P^*(p,q)$, 
	\begin{align*}
		&\Phi(t P_* + (1-t)P^*(p,q)) \\
		&\geq \< \(t P_* - (1-t)P^*(p,q)\) - P^*(p,q), \alpha^*\oplus\beta^*\> + \Phi(P^*(p,q)) \\
		&= \< t P_* - (1-t)P^*(p,q), \alpha^*\oplus\beta^*\> . 
	\end{align*}
	Hence, we obtain a contradiction. This completes the proof. 
\end{proof}

%

\bibliographystyle{plain}      
\bibliography{ref}   

\end{document}